\newcommand{\C}{\mathbf{C}}
\newcommand{\F}{\mathbf{F}}
\newcommand{\Q}{\mathbf{Q}}
\newcommand{\Z}{\mathbf{Z}}
\newcommand{\fD}{\mathfrak{D}}
\newcommand{\fa}{\mathfrak{a}}
\newcommand{\fb}{\mathfrak{b}}
\newcommand{\ff}{\mathfrak{f}}
\newcommand{\fp}{\mathfrak{p}}
\renewcommand{\phi}{\varphi}
\renewcommand{\epsilon}{\varepsilon}
\newcommand{\pmods}[1]{\; (\operatorname{mod}^* #1)}
\newcommand{\Tr}{\operatorname{Tr}}
\newcommand{\Ind}{\operatorname{Ind}}
\newcommand{\Hom}{\operatorname{Hom}}
\newcommand{\Ker}{\operatorname{Ker}}
\newcommand{\ab}{{\operatorname{ab}}}
\newcommand{\pgcd}{\operatorname{pgcd}}
\renewcommand{\det}{\operatorname{det}}
\newcounter{spec}
\newenvironment{thlist}{\begin{list}{\rm{(\roman{spec})}}%
{\usecounter{spec}\labelwidth=20pt\itemindent=0pt\labelsep=10pt}}%
{\end{list}}%
\newcommand{\Inj}{\lhook\joinrel\longrightarrow}
\newcommand{\by}[1]{\overset{#1}{\longrightarrow}}
\newcommand{\iso}{\by{\sim}}
\newtheorem{lemme}{Lemme}
\newtheorem{prop}{Proposition}
\newtheorem{cor}{Corollaire}
\theoremstyle{definition}
\newtheorem{defn}{D\'efinition}
\theoremstyle{remark}
\newtheorem{rque}{Remarque}
\begin{document}

\title{Quelques calculs de sommes de Gauss}
\author{Bruno Kahn}
\address{Institut de Math{\'e}matiques de Jussieu\\ UPMC - UFR 929, Ma\-th\'e\-ma\-ti\-ques\\ 4
Place Jussieu\\75005 Paris\\ France}
\email{kahn@math.jussieu.fr}
\date{28 juillet 2011}
\begin{abstract}
On remarque que l'action galoisienne sur les constantes locales associ\'ees aux
repr\'esentations galoisiennes d'un corps local fournit des renseignements sur leur nature
arithm\'etique, permettant notamment de borner leur ordre quand ce sont des racines de
l'unit\'e. Elle fournit aussi des renseignments sur l'effet des op\'erations d'Adams sur ces
constantes. 
\end{abstract}
\begin{altabstract}
We observe that the Galois action on local constants associated to Galois representations of a
local field yields information on their arithmetic nature, for example provides an upper
bound to their order when they are roots of unity. It also yields information on the effect of
Adams operations on these constants.
\end{altabstract}
\maketitle

\hfill \`A Paulo Ribenbo\"\i m

\section*{Introduction} Les constantes locales, ou facteurs epsilon locaux, associ\'es \`a une
repr\'esentation galoisienne complexe d'un corps local, ont \'et\'e relativement peu
\'etudi\'es pour eux-m\^emes. Rappelons qu'il s'agit d'une d\'ecomposition canonique de la constante de
l'\'equation fonctionnelle des fonctions $L$ d'Artin en produits de facteurs locaux,
g\'en\'eralisant la d\'ecomposition obtenue \`a partir de la th\`ese de
Tate \cite{tatethese} dans le cas ab\'elien.

Plus pr\'ecis\'ement, si $K$ est un corps global, la fonction $L$ d'Artin compl\'et\'ee
$\Lambda(\rho,s)$ associ\'ee \`a  une repr\'esentation complexe $\rho$ du groupe de Galois
absolu de $K$ admet (d'apr\`es Artin et Brauer) un prolongement m\'eromorphe au plan complexe et
une
\'equation fonctionnelle de la forme
\[\Lambda(\rho,1-s) = W(\rho)\Lambda(\rho^*,s)\]
o\`u $\rho^*$ est la repr\'esentation duale de $\rho$ et $W(\rho)$ est un nombre complexe de
module $1$ \cite[p. 14]{martinet}. De plus $W(\rho)\in \Q^\ab$, la cl\^oture cyclotomique de
$\Q$.
 
Lorsque $\rho$ est de degr\'e $1$, le corps de classes et la th\`ese de Tate donnent une
d\'ecomposition \cite[p. 346]{tatethese}
 \begin{equation}\label{eq1}
 W(\rho)=\prod_v W(\rho_v)
 \end{equation}
dans laquelle $v$ d\'ecrit l'ensemble des places de $K$ et $\rho_v$ est la restriction de
$\rho$ au groupe de d\'ecomposition en $v$. Le nombre complexe $W(\rho_v)$ ne d\'epend que de
$\rho_v$, est de module $1$ et vaut $1$ si $\rho$ est non ramifi\'e en $v$.

Langlands a d\'emontr\'e que la d\'ecomposition \eqref{eq1} s'\'etendait au cas non
ab\'elien.  Sa d\'emonstration, non publi\'ee, est r\'esum\'ee dans un manuscrit non publi\'e
de Helmut Koch \cite{koch}. Une autre d\'emonstration en a \'et\'e donn\'ee par Deligne
\cite{deligne}; une variante de celle-ci appara\^\i t au \S 2 de \cite{tate}, qui me servira de
r\'ef\'erence principale dans cette note.  Dans le cas de caract\'eristique positive, une
troisi\`eme preuve est donn\'ee par Laumon dans \cite{laumon}.

Rappelons que, si maintenant $K$ est un corps local, la fonction $\rho\mapsto W(\rho)$ est
caract\'eris\'ee par les axiomes suivants

\begin{thlist}
\item \emph{Additivit\'e}: $W(\rho+\rho') = W(\rho)W(\rho')$.
\item \emph{Inductivit\'e en degr\'e $0$}: si $L/K$ est une extension finie, et si $\rho$ est
un caract\`ere virtuel du groupe de Galois absolu $G_L$, de degr\'e $0$, alors
$W(\Ind_{G_L}^{G_K}\rho) = W(\rho)$.
\item \emph{Normalisation}: Si $\deg \rho=1$, $W(\rho)$ est donn\'e par la formule de \cite[p.
322]{tatethese} via le corps de classes local. 
\end{thlist}

Voici, \`a ma connaissance, ce qui appara\^\i t dans la litt\'erature sur les nombres $W(\rho)$:
 
\begin{enumerate}
\item Une \'etude d\'etaill\'ee de $W(\rho)$ est faite par Fr\"ohlich dans \cite{frohlich}
lorsque $\rho$ est mod\'er\'ement ramifi\'ee.
\item Si $\rho$ est irr\'eductible et sauvagement ramifi\'ee (c'est-\`a-dire que l'exposant de
son conducteur d'Artin est
$>1)$, $W(\rho)$ est une racine de l'unit\'e \cite[cor. 4]{tate}. Cela r\'esulte du cas
de degr\'e $1$, o\`u le r\'esultat est d\^u \`a Lamprecht et Dwork (ibid., cor. 1). Dans
\cite[1.4]{GK}, G\'erardin et Kutzko donnent de ce cas particulier une preuve plus explicite
que celle figurant dans \cite{tate}.
\item Si $\rho$ est orthogonale, virtuelle de degr\'e $0$ et de d\'eterminant $1$,
$W(\rho)^2=1$; dans \cite{deligne2}, Deligne donne la formule
\[W(\rho)=(-1)^{w_2(\rho)}\]
o\`u $w_2(\rho)$ est la \emph{deuxi\`eme classe de Stiefel-Whitney} de $\rho$.
\item Si $\rho$ est symplectique, virtuelle de degr\'e $0$ et de d\'eterminant $1$, on a
encore $W(\rho)^2=1$; le signe de $W(\rho)$ est intimement li\'e \`a la structure de module
galoisien de l'anneau des entiers de $L$, o\`u $\rho$ se factorise par $Gal(L/K)$ (Fr\"ohlich,
Taylor\dots voir par exemple \cite{tous}).
\item Dans \cite{dh}, Deligne et Henniart \'etudient comment $W(\rho)$ varie quand on tord
$\rho$ par une repr\'esentation  pas trop ramifi\'ee. Ils posent aussi une question relative
\`a la trivialit\'e de $W$ sur certains produits tensoriels de repr\'esentations (question
4.10), et y r\'epondent positivement dans certains cas particuliers.
\item Dans \cite{henniart}, Henniart \'etend la version de G\'erardin-Kutzko du th\'eor\`eme
de Lamprecht et Dwork mentionn\'e ci-dessus \`a certaines repr\'esentations galoisiennes
``sauvages homog\`enes". Pour une telle repr\'esentation $\rho$, il obtient une d\'ecomposition
du type
\begin{equation}\label{eq2}
W(\rho) \equiv \det_\rho(g_\rho)^{-1} G(g_\rho)^{\deg \rho} \pmod{\mu}
\end{equation}
o\`u $\mu$ est le groupe des racines $p$-primaires de l'unit\'e ($p=$ caract\'eristique
r\'esiduelle), $g_\rho$ est essentiellement un \'el\'ement de $K^*$ attach\'e \`a $\rho$ et
$G(g_\rho)$ est une somme de Gauss quadratique attach\'ee \`a $g_\rho$. Par convention
$G(g_\rho)=1$ si
$p=2$, et
$G(g_\rho)$ est une racine $4$-i\`eme de l'unit\'e si $p>2$.
\item Dans \cite{volf1} et \cite{volf2}, Volf r\'esoud affirmativement certains cas de 
\cite[qu. 4.10]{dh}.
\end{enumerate}

Dans cette note, je m'int\'eresse principalement \`a la partie \emph{$p$-primaire} de
$W(\rho)$, lorsque ce nombre est une racine de l'unit\'e, o\`u $p$ est la caract\'eristique
r\'esiduelle. L'observation centrale est tr\`es simple: en \'etudiant l'action de
$Gal(\Q^\ab/\Q)$ sur $W(\rho)$, on peut borner l'ordre de cette partie $p$-primaire en fonction
du corps de d\'efinition de $\rho$. Voir proposition \ref{p1} et ses corollaires. Les calculs indiquent en fait que le comportement galoisien est tr\`es diff\'erent sur trois facteurs (canoniques?) de $W(\rho)$ g\'en\'eralisant \eqref{eq2}.

Une premi\`ere version de ces r\'esultats a \'et\'e obtenue en 1986 \cite{ktoh}. Ils ont
ensuite \'et\'e raffin\'es au cours des ann\'ees 80/90, suivant notamment des suggestions de
Guy Henniart, mais je ne les ai jamais soumis
\`a publication. J'esp\`ere qu'il n'est pas trop tardif de le faire maintenant: je remercie
Claude L\'evesque pour son amicale insistance sur ce point. Ce texte est plus ou moins le
contenu des notes d'une s\'erie d'expos\'es que j'ai faits \`a l'Universit\'e McMaster en 1989
ou 1991, avec des ajouts pour $p=2$.

Il est tentant de se demander si ces calculs \'eclairent la question 4.10 de \cite{dh}. J'ai
jug\'e plus prudent de m'en abstenir ici, pour ne pas retarder encore plus la r\'edaction.

\subsection*{Notations} $K$ est un corps local non archim\'edien, de caract\'eristique $0$ et
de caract\'eristique r\'esiduelle $p$. Son corps r\'esiduel est $\F_q$. Selon Langlands et
Deligne, on associe \`a une repr\'esentation complexe $\rho$ du groupe de Weil de $K$ un
\emph{facteur epsilon} $\epsilon(\rho,\psi,dx,s)$, d\'ependant \'egalement d'un caract\`ere
additif $\psi$ de $K$, d'une mesure de Haar $dx$ sur $K$ et d'une variable complexe $s$. Comme
dans \cite{tate}, on choisit pour $\psi$ le caract\`ere additif standard:
\[\begin{CD}
\psi_K:K@>\Tr_{K/\Q_p}>>\Q_p@>>> \Q_p/\Z_p &\Inj& \Q/\Z @>\exp{2\pi i-}>> \C^*
\end{CD}\]
pour $dx$ la mesure de Haar autoduale pour $\psi$, et on pose
\[W(\rho)=\epsilon(\rho,\psi_K,dx,1/2).\]

Ces normalisations semblent importantes au moins pour certains calculs, notamment au \S \ref{s4}.

Je ne consid\'ererai que des repr\'esentations galoisiennes, c'est-\`a-dire se prolongeant au
groupe de Galois absolu de $K$: cela ne restreint pas vraiment la g\'en\'eralit\'e puisqu'une
repr\'esentation du groupe de Weil s'obtient par torsion \`a partir d'une repr\'esentation
galoisienne.

On note $\ff(\rho)$ le conducteur de $\rho$ et $c(\rho)$ son exposant. On note
$\Gamma=Gal(\Q^\ab/\Q)$, $\kappa:\Gamma\iso \hat \Z^*$ le caract\`ere cyclotomique et
$\kappa_p:\Gamma\to \Z_p^*$ sa composante $p$-primaire. On note $E=\Q(\rho)\subset \Q^\ab$ le
corps engendr\'e par les valeurs du caract\`ere de $\rho$ et $\Gamma_E=Gal(\Q^\ab/E)$. Si
$a,b\in K^*$, on note $(a,b)\in \{\pm 1\}$ leur symbole de Hilbert.

\section{Remarques sur une racine $4$-i\`eme de l'unit\'e}

Supposons $p>2$. On peut se demander s'il existe une fonction $\rho\mapsto \iota(\rho)$, \`a
valeurs dans les racines $4$-i\`emes de l'unit\'e, co\"\i ncidant avec $G(g_\rho)^{\deg \rho}$
dans le cas de \eqref{eq2} et v\'erifiant les conditions (i) et (ii) de l'introduction. Je
n'\'etudierai pas ce probl\`eme ici, mais me contenterai de deux remarques:

\begin{lemme}\label{l1} a) Dans \eqref{eq2}, on a
\[G(g_\rho)^{2\deg \rho} =  (-1, \ff(\rho))\]
o\`u $\ff(\rho)$ d\'esigne un g\'en\'erateur quelconque du conducteur de $\rho$. Cette
fonction v\'erifie les conditions {\rm (i)} et {\rm (ii)} de l'introduction.\\ 
b) Soit $K_0$ un
corps de nombres, et soit $\rho$ une repr\'esentation galoisienne complexe de $G_{K_0}$. Pour
toute place $v$ de $K_0$, notons $\rho_v$ la restriction de $\rho$ au groupe de d\'ecomposition
en $v$. Alors le produit
\[\prod_{v \text{ finie impaire}} (-1, \ff(\rho_v))_v\]
ne d\'epend pas que des composantes dyadiques et archim\'ediennes de $\rho$.
\end{lemme}

\begin{proof} a) r\'esulte facilement de la d\'efinition de $G(g_\rho)$ et de $g_\rho$
\cite[\S\S 2, 4]{henniart}. La seconde affirmation est \'evidente. 

Pour b),  il suffit de donner un exemple. On prend $K_0=\Q$; soit $l$ un
nombre premier impair, et soient $p_1,p_2$ deux nombres premiers v\'erifiant
\begin{thlist}
\item $p_1\equiv p_2\equiv 1\pmod{l}$;
\item $l\mid \displaystyle\frac{p_i-1}{d_i}$, o\`u $d_i$ est l'ordre de $2$ dans $\F_{p_i}^*$;
\item $p_1\equiv 1\pmod{4}$, $p_2\equiv -1\pmod{4}$.
\end{thlist}

Pour $i=1,2$, soit $\chi_i$ un caract\`ere de Dirichlet $\pmod{p_i}$ d'ordre $l$: leur
existence est assur\'ee par (i). Comme $l$ est impair, $\chi_1$ et $\chi_2$ sont triviaux \`a
l'infini, et (ii) implique qu'ils sont aussi triviaux en $2$. D'autre part, $(-1,
\ff(\chi_i))_p=1$ si $p\ne p_i$, et (iii) implique que $(-1, \ff(\chi_1))_{p_1}=1$, $(-1,
\ff(\chi_2))_{p_2}=-1$.

Un exemple minimal est $l=3$, $p_1=109$, $p_2=31$.
\end{proof}

Le sens du lemme \ref{l1} a) est que l'obstruction \`a \'etendre $\rho\mapsto G(g_\rho)^{\deg
\rho}$ dans le style de l'introduction dispara\^\i t quand on l'\'el\`eve au carr\'e. Le lemme
\ref{l1} b) sugg\`ere qu'on ne peut gu\`ere attendre une preuve d'existence de cette extension
par la m\'ethode de Deligne \cite{deligne}.

Pour la suite je me contenterai d'une version de $\iota(\rho)$ au signe pr\`es.
Une telle fonction est facile \`a exhiber: on peut prendre
\[\iota(\rho) = i^{\left(\frac{N(\ff(\rho))-1}{2}\right)^2}.\]

\begin{defn} Si $p>2$, 
\begin{equation}\label{eq3}
W^*(\rho)=\iota(\rho) W(\rho).
\end{equation}
Si $p=2$, $W^*(\rho)=W(\rho)$.
\end{defn}

L'inconv\'enient de ce choix de $\iota(\rho)$ est qu'il est invariant par l'action de Galois: ce n'est pas le
cas de l'invariant $G(g_\rho)^{\deg \rho}$ de Henniart. On peut esp\'erer qu'une r\'eponse
positive
\`a la question ci-dessus permettrait de se d\'ebarrasser compl\`etement des d\'esagr\'eables
symboles de Hilbert polluant les formules ci-dessous.

\section{Action galoisienne sur $W^*(\rho)$} 

Toute repr\'esentation galoisienne complexe
$\rho$ est d\'efinie sur un corps cyclotomique; le groupe de Galois $\Gamma=Gal(\Q^\ab/\Q)$
op\`ere donc sur [le caract\`ere de] $\rho$, ainsi que sur $W(\rho)\in \Q^\ab$.

Si $p$ est un nombre premier impair, posons $p^*=(-1)^{\frac{p-1}{2}} p$, de sorte que
$\Q(\sqrt{p^*})\subset \Q(\mu_p)$. Si $p=2$, posons $p^*=p$: on a $\Q(\sqrt{2})\subset
\Q(\mu_8)$.  Le
lemme suivant se v\'erifie facilement (pour $p=2$, utiliser la relation $(2,-1)=1$):

\begin{lemme} Pour tout nombre premier $p$ et tout $\sigma\in \Gamma$, on a
\[\sqrt{p^*}^{\sigma-1} = (p,\kappa_p(\sigma))\]
(symbole de Hilbert calcul\'e dans $\Q_p$).\qed
\end{lemme}

On en d\'eduit:

\begin{prop}\label{p1} Soit $\sigma\in \Gamma$. Alors
\[W^*(\rho)^\sigma = W^*(\rho^\sigma)\det_\rho(\kappa_p(\sigma))^\sigma
(N\ff(\rho),\kappa_p(\sigma))\] o\`u $\kappa_p(\sigma)\in \Z_p^*$ est le caract\`ere
$p$-cyclotomique de $\sigma$ et $\det_\rho$ est la repr\'esentation d\'eterminant de $\rho$,
vue comme caract\`ere multiplicatif via le corps de classes local.
\end{prop}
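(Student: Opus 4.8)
The plan is to prove first the equivalent ``unstarred'' identity, call it $(\ast)$:
\[
W(\rho)^\sigma \;=\; W(\rho^\sigma)\,\det_\rho(\kappa_p(\sigma))^\sigma\,\sqrt{N\ff(\rho)}^{\,1-\sigma},
\]
where $\sqrt{N\ff(\rho)}\in\Q^\ab$ is the positive square root, and then to recover the proposition from $(\ast)$ by the substitution $W^*=\iota(\rho)W$.

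For $(\ast)$ I would invoke the uniqueness part of the Langlands--Deligne formalism. Viewed as functions of the virtual representation $\rho$ with values in $\C^*$, both sides of $(\ast)$ are multiplicative for $\oplus$ (additivity of $W$ and of the conductor exponent $c(\rho)$, multiplicativity of $\det_\rho$), and both are inductive in degree $0$: the point to check on the right-hand side is that, for $L/K$ finite and $\deg\rho=0$, the conductor--discriminant formula gives $N\ff(\Ind_{G_L}^{G_K}\rho)=N\ff(\rho)$ (the different of $L/K$ dropping out since $\deg\rho=0$), and $\det_{\Ind_{G_L}^{G_K}\rho}$ restricts on $K^*$ to $\det_\rho$ (the transfer $G_K^\ab\to G_L^\ab$ being the inclusion $K^*\inj L^*$ in local class field theory, the discriminant character term again absent because $\deg\rho=0$); hence both factors are unchanged at $\kappa_p(\sigma)\in\Z_p^*\subset K^*$. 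So it suffices to prove $(\ast)$ when $\rho$ is a character $\chi$ of $G_K$, seen via class field theory as a character of $K^*$. If $\chi$ is unramified this is trivial: $N\ff(\chi)=1$, $\chi(\kappa_p(\sigma))=1$, and $W(\chi)=\chi(c)$ with $c\in K^*$ depending only on $\psi_K$, so $W(\chi)^\sigma=\chi^\sigma(c)=W(\chi^\sigma)$. If $\chi$ is ramified, $W(\chi)$ is the normalized Gauss sum $\tau_K(\chi)/|\tau_K(\chi)|$, with $|\tau_K(\chi)|^2=N\ff(\chi)$ (classical); the explicit shape of $\psi_K$ shows it factors through $\Q_p/\Z_p$, so $\sigma$ acts on it through the $p$-cyclotomic character, $\psi_K^\sigma(x)=\psi_K(\kappa_p(\sigma)x)$, and the change of variable $x\mapsto\kappa_p(\sigma)^{-1}x$ in the Gauss sum yields $\tau_K(\chi)^\sigma=\det_\chi(\kappa_p(\sigma))^\sigma\,\tau_K(\chi^\sigma)$. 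Since $|\tau_K(\chi^\sigma)|=|\tau_K(\chi)|=\sqrt{N\ff(\chi)}$ (the square being a $\sigma$-fixed rational integer), division gives exactly $(\ast)$.

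Granting $(\ast)$, the proposition follows. When $p>2$, a direct check on the residue of $N\ff(\rho)$ modulo $4$ shows $\iota(\rho)=\sqrt{N\ff(\rho)^*}/\sqrt{N\ff(\rho)}$, and $\iota(\rho^\sigma)=\iota(\rho)$ because $N\ff$ is Galois-stable; multiplying $(\ast)$ by $\iota(\rho)^\sigma$ and dividing by $\iota(\rho)=\iota(\rho^\sigma)$ gives
\[
W^*(\rho)^\sigma \;=\; W^*(\rho^\sigma)\,\det_\rho(\kappa_p(\sigma))^\sigma\;\iota(\rho)^{\sigma-1}\sqrt{N\ff(\rho)}^{\,1-\sigma},
\]
and $\iota(\rho)^{\sigma-1}\sqrt{N\ff(\rho)}^{\,1-\sigma}=\sqrt{N\ff(\rho)^*}^{\,\sigma-1}=(N\ff(\rho),\kappa_p(\sigma))$ by the preceding Lemma applied to $N\ff(\rho)=q^{c(\rho)}$ written as a power of $p$, using bimultiplicativity of the Hilbert symbol (the sign ambiguities in extracting $\sqrt{(p^*)^{c(\rho)}}$ are rational, hence $\sigma$-invariant, hence harmless). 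For $p=2$ one has $W^*=W$, $\iota\equiv1$, and $\sqrt{N\ff(\rho)}^{\,1-\sigma}=(N\ff(\rho),\kappa_2(\sigma))$ comes from $(2,-1)=1$, exactly as in the preceding Lemma.

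The delicate point is the exact normalization entering $(\ast)$: that the residual power of $\sqrt q$ in $W(\rho)^\sigma/W(\rho^\sigma)$ is governed by $N\ff(\rho)$ alone, and not by the full Artin conductor $\ff(\rho)\fD_{K/\Q_p}^{\deg\rho}$. In degree $1$ this is the classical value $|\tau_K(\chi)|^2=N\ff(\chi)$; the reduction then transports it, the different of $L/K$ disappearing from $N\ff(\Ind_{G_L}^{G_K}\rho)$ precisely because one inducts in degree $0$ — and this is also what the otherwise ad hoc factor $\iota$ achieves, namely repackaging this square-root ambiguity as the Hilbert symbol $(N\ff(\rho),\kappa_p(\sigma))$.
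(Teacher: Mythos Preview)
Your proof is correct, but the route differs from the paper's. The paper gives essentially a one-line argument: it invokes Fr\"ohlich's theorem on the Galois action on local constants (as recorded in Martinet's survey, \cite[p.~43, cor.~5.2]{martinet}), which is exactly your identity $(\ast)$, and then observes that passing from $W$ to $W^*$ converts the factor $\sqrt{N\ff(\rho)}^{\,1-\sigma}$ into the Hilbert symbol $(N\ff(\rho),\kappa_p(\sigma))$. You instead \emph{reprove} Fr\"ohlich's theorem from scratch: you use the Langlands--Deligne characterization of $W$ (additivity, inductivity in degree~$0$, normalization) to reduce $(\ast)$ to the abelian case, and then establish that case by the classical change-of-variable computation on the Gauss sum $\tau_K(\chi)$, exploiting that $\psi_K^\sigma=\psi_K(\kappa_p(\sigma)\,\cdot\,)$. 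Your derivation of the proposition from $(\ast)$ --- identifying $\iota(\rho)$ with $\sqrt{N\ff(\rho)^*}/\sqrt{N\ff(\rho)}$ and then applying the preceding Lemma on $\sqrt{p^*}^{\,\sigma-1}$ --- is exactly the ``reformulation'' the paper alludes to but does not spell out. What your approach buys is self-containment and transparency (one sees precisely where each factor comes from); what the paper's approach buys is brevity, at the cost of deferring the real content to the cited reference.
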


\begin{proof} C'est une reformulation du th\'eor\`eme de Fr\"ohlich \cite[p. 43, cor.
5.2]{martinet}. Avoir remplac\'e $W(\rho)$ par $W^*(\rho)$ donne une formule l\'eg\`erement
plus propre.
\end{proof}

\begin{cor}\label{c1} Soit $E\subset \Q^\ab$ le corps engendr\'e par les valeurs du caract\`ere
de
$\rho$. Notons $p^n$ le nombre exact de racines $p$-primaires de l'unit\'e contenues dans $E$.\\
a) Si $p>2$ et $n>0$,  $W^*(\rho)\in \mu_{p^{n+1}} E^*$.\\
b) Si $p>2$ et $n=0$, $W^*(\rho)\in E(\mu_p)^*$ et $W^*(\rho)^{2d}\in E^*$, o\`u $d =
\pgcd(|\mu_{p-1}\cap E|,[E(\mu_p):E])$.\\
c) Si $p=2$ et $n\ge 2$, $W^*(\rho)\in \mu_{2^{n+2}} E^*$.\\
d) Si $p=2$ et $n=1$,  $W^*(\rho)\in E(\mu_8)^*$ et $W^*(\rho)^2\in E^*$.
\end{cor}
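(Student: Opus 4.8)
The plan is to specialize Proposition \ref{p1} to $\sigma\in\Gamma_E$. For such $\sigma$ the character of $\rho^\sigma$ equals that of $\rho$, hence $\rho^\sigma\cong\rho$ and $W^*(\rho^\sigma)=W^*(\rho)$; moreover each value $\det(\rho(g))$ is a polynomial with rational coefficients in the $\Tr(\rho(g^k))$, so $\det_\rho$ takes values in $E$ and $\det_\rho(\kappa_p(\sigma))^\sigma=\det_\rho(\kappa_p(\sigma))$. Proposition \ref{p1} then reduces to
\[W^*(\rho)^{\sigma-1}=\det_\rho(\kappa_p(\sigma))\,(N\ff(\rho),\kappa_p(\sigma)),\qquad\sigma\in\Gamma_E,\]
the right-hand side lying in the group $\mu_E$ of roots of unity of $E$. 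So $g\colon\sigma\mapsto W^*(\rho)^{\sigma-1}$ is a homomorphism $\Gamma_E\to\mu_E$; then $W^*(\rho)^{\mathrm{ord}(g)}\in E^*$, and if $g$ coincides with $\sigma\mapsto\zeta^{\sigma-1}$ for a root of unity $\zeta$, then $W^*(\rho)\zeta^{-1}\in E^*$.

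The rest is a study of $g$ resting on three facts. First, since $E$ contains exactly $\mu_{p^n}$ among the $p$-power roots of unity, $\kappa_p(\Gamma_E)$ is a closed subgroup of $1+p^n\Z_p$ (resp.\ $1+2^n\Z_2$) not contained in $1+p^{n+1}\Z_p$; when that ambient group is procyclic, i.e.\ $p>2,\ n\ge 1$ or $p=2,\ n\ge 2$, this forces $\kappa_p(\Gamma_E)=1+p^n\Z_p$, while for $p>2,\ n=0$ one only records that $\sigma\mapsto\kappa_p(\sigma)\bmod p$ has image of order $[E(\mu_p):E]$ in $\F_p^*$, and similarly modulo $8$ for $p=2,\ n=1$. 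Second, $\det_\rho$ restricted to $1+p\Z_p$ (resp.\ $1+4\Z_2$) has image a finite $p$-group of roots of unity of $E$, hence contained in $\mu_{p^n}$; since $(1+p\Z_p)^{p^n}=1+p^{n+1}\Z_p$ for $p>2$ and $(1+4\Z_2)^{2^n}=1+2^{n+2}\Z_2$, the character $\det_\rho$ is trivial on $1+p^{n+1}\Z_p$ (resp.\ $1+2^{n+2}\Z_2$). Third, the Hilbert symbol $(N\ff(\rho),\,\cdot\,)$ is trivial on $1+p\Z_p$ for $p>2$ (each such unit is a square in $\Q_p^*$) and on $1+8\Z_2$ for $p=2$.

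For (a): the Hilbert symbol vanishes on $\Gamma_E$, and on $1+p^n\Z_p$ the character $\det_\rho$ factors through $(1+p^n\Z_p)/(1+p^{n+1}\Z_p)\cong\Z/p$, so $g$ is valued in $\mu_p$ and trivial on $\Gamma_{E(\mu_{p^{n+1}})}$, i.e.\ $g\in\Hom(Gal(E(\mu_{p^{n+1}})/E),\mu_p)$. Since $\mu_{p^n}\subseteq E$ and $\mu_{p^{n+1}}\not\subseteq E$ one has $[E(\mu_{p^{n+1}}):E]=p$, and $\zeta\mapsto(\sigma\mapsto\zeta^{\sigma-1})$ maps $\mu_{p^{n+1}}$ onto $\Hom(Gal(E(\mu_{p^{n+1}})/E),\mu_p)$ (its kernel is $\mu_{p^{n+1}}\cap E=\mu_{p^n}$, so the image has order $p$); taking $\zeta\in\mu_{p^{n+1}}$ with $g(\sigma)=\zeta^{\sigma-1}$ gives $W^*(\rho)\in\mu_{p^{n+1}}E^*$. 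Case (c) is the same with $p=2$: now $\det_\rho$ factors through $(1+2^n\Z_2)/(1+2^{n+2}\Z_2)\cong\Z/4$, so $g\in\Hom(Gal(E(\mu_{2^{n+2}})/E),\mu_4)$; one checks $[E(\mu_{2^{n+2}}):E]=4$ and that $\zeta\mapsto(\sigma\mapsto\zeta^{\sigma-1})$ maps $\mu_{2^{n+2}}$ onto this group. Case (d): $\det_\rho$ is trivial on $(1+4\Z_2)^2=1+8\Z_2$ and so is the Hilbert symbol, so $g$ is valued in $\{\pm1\}$ (whence $W^*(\rho)^2\in E^*$) and trivial on $\Gamma_{E(\mu_8)}$ (whence $W^*(\rho)\in E(\mu_8)^*$). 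Case (b): $n=0$ makes $\det_\rho$ trivial on $1+p\Z_p$, so $g(\sigma)=\psi(\kappa_p(\sigma)\bmod p)$, where $\psi$ is the character of $\F_p^*$ through which $\det_\rho|_{\Z_p^*}$ factors, times a power of the quadratic character; then $g$ is trivial on $\Gamma_{E(\mu_p)}$ (so $W^*(\rho)\in E(\mu_p)^*$), while $\mathrm{ord}(g)$ divides both $[E(\mu_p):E]$ and $2|\mu_{p-1}\cap E|$ (the values of $\psi$ lie in $\mu_{p-1}\cap E$, up to the quadratic twist), hence divides $\pgcd([E(\mu_p):E],2|\mu_{p-1}\cap E|)$ and a fortiori $2d$, giving $W^*(\rho)^{2d}\in E^*$.

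The delicate part is the bookkeeping for $p=2$ and when $K/\Q_p$ is ramified: one must use $\det_\rho$ on $\Z_p^*\subseteq\mathcal O_K^*$ (hence the appeal to ``values in $E$'' rather than to the conductor of $\det_\rho$), handle carefully the groups $1+2^j\Z_2$, their power subgroups and the squares in $\Q_2^*$ (this is what turns $\mu_4$ into $\mu_8$), and pin down $[E(\mu_{p^{n+1}}):E]=p$ and $[E(\mu_{2^{n+2}}):E]=4$, which is exactly what makes the maps $\zeta\mapsto(\sigma\mapsto\zeta^{\sigma-1})$ surjective.
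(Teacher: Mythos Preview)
Your proof is correct and follows essentially the same route as the paper: specialize Proposition~\ref{p1} to $\sigma\in\Gamma_E$, reduce to analyzing $\det_\rho(\kappa_p(\sigma))$ and the Hilbert symbol on the relevant congruence subgroups of $\Z_p^*$, and read off the conclusions. The one place where you are more explicit than the paper is the final Kummer step: the paper simply states that ``$W^*(\rho)\in E(\mu_{p^{n+1}})$ and $W^*(\rho)^p\in E$'' implies $W^*(\rho)\in\mu_{p^{n+1}}E^*$, whereas you justify this by checking that $\zeta\mapsto(\sigma\mapsto\zeta^{\sigma-1})$ surjects $\mu_{p^{n+1}}$ onto $\Hom(Gal(E(\mu_{p^{n+1}})/E),\mu_p)$ (and analogously in case~(c)). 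That is a useful addition; everything else matches the paper's argument in substance.
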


\begin{proof} Soit $\sigma\in \Gamma_E:=Gal(\Q^\ab/E)$. Comme $\det_\rho$ prend ses valeurs
dans $E$, on a:
\[W^*(\rho)^{\sigma-1} = \det_\rho(\kappa_p(\sigma))(N\ff(\rho),\kappa_p(\sigma)).\]

a) L'hypoth\`ese implique que $\kappa_p(\sigma)\equiv 1\pmod{p^n}$. 

Si $p>2$, 
$\kappa_p(\sigma)$ est un carr\'e et le symbole de Hilbert vaut $1$. De plus, $\det_\rho(\kappa_p(\sigma))\in \mu_p$: en effet $\det_\rho(1+p\Z_p)\subset \mu_{p^n}$ et
$1+p^n\Z_p=(1+p\Z_p)^{p^{n-1}}$. On trouve donc que $W^*(\rho)^{\sigma-1}\in \mu_p$, et
$W^*(\rho)^{\sigma-1}=1$ si $\kappa_p(\sigma)\in 1+p^{n+1}\Z_p$. Il en r\'esulte que
$W^*(\rho)\in E(\mu_{p^{n+1}})$ et
$W^*(\rho)^p\in E$, donc que $W^*(\rho)\in
\mu_{p^{n+1}} E^*$.

b) On a $\det_\rho(u)=1$ et $(p,u)=1$ si $u\in 1+p\Z_p$: ceci montre
que
$W^*(\rho)\in E(\mu_p)$. Posons maintenant $d_1=|\mu_{p-1}\cap E|$,
$d_2=[E(\mu_p):E]$ et $d=\pgcd(d_1,d_2)$. Alors
$\det_\rho(\kappa_p(\sigma))\in \mu_d$ pour tout $\sigma\in \Gamma_E$, puisque $\sigma^{d_2}\in
\Gamma_{E(\mu_p)}$. De plus, on voit que $W^*(\rho)^d\in E$ si $d$ est pair et
$W^*(\rho)^{2d}\in E$ si
$d$ est impair.

c) Si $p=2$, le raisonnement de a) reste valable tant que $n\ge 3$ ($(\Z_2^*)^{2^{n-2}} = 1+2^n\Z_2$
$\Rightarrow$ $\det\rho(1+2^n\Z_2)\subset \mu_4$), et m\^eme 
pour $n=2$ (alors $\det\rho(u)\in \mu_4$ pour tout $u\in \Z_2^*$). 

d) M\^eme calcul que pr\'ec\'edemment.
\end{proof}

\begin{cor} \label{c2} Gardons les notations du corollaire \ref{c1}, et supposons que $\det_\rho$ soit trivial. Alors $W^*(\rho)\in E^*$, sauf si 
\begin{thlist}
\item $p>2$, $n=0$ et $N\ff(\rho)$ n'est pas un carr\'e; alors $W^*(\rho)\in \sqrt{p^*}E^*$.
\item $p=2$, $n=2$ et $N\ff(\rho)$ n'est pas un carr\'e; alors $W^*(\rho)\in \mu_8 E^*$.
\item $p=2$, $n=1$.
\end{thlist}
\end{cor}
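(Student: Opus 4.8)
The plan is to specialize Corollary~\ref{c1} to the case where $\det_\rho$ is trivial and then track exactly when the residual Hilbert-symbol factor forces a genuine extension of $E$. Since $\det_\rho$ is trivial, the basic formula from the proof of Corollary~\ref{c1} simplifies for $\sigma\in\Gamma_E$ to
\[W^*(\rho)^{\sigma-1} = (N\ff(\rho),\kappa_p(\sigma)),\]
so $W^*(\rho)^{\sigma-1}\in\{\pm1\}$ always, and the whole question reduces to understanding the character $\sigma\mapsto (N\ff(\rho),\kappa_p(\sigma))$ on $\Gamma_E$.

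First I would dispose of the generic case. If $N\ff(\rho)$ is a square in $\Q_p^*$, the Hilbert symbol is identically $1$, hence $W^*(\rho)$ is fixed by $\Gamma_E$ and lies in $E^*$; this covers all the unexceptional situations. When $N\ff(\rho)$ is not a square, $(N\ff(\rho),-)$ is a nontrivial quadratic character of $\Q_p^*$, and I must decide whether it is still trivial after restriction along $\kappa_p:\Gamma_E\to\Z_p^*$. For $p>2$ this depends on whether $\kappa_p(\Gamma_E)$ contains non-squares of $\Z_p^*$, which by the analysis in Corollary~\ref{c1} happens precisely when $n=0$ (for $n>0$, $\kappa_p(\Gamma_E)\subset 1+p\Z_p\subset(\Z_p^*)^2$, so the symbol dies); this yields case (i), and the identification $W^*(\rho)\in\sqrt{p^*}E^*$ comes from the lemma $\sqrt{p^*}^{\sigma-1}=(p,\kappa_p(\sigma))$ together with the fact that a non-square unit times $p$ differs from $p$ by a square, so $(N\ff(\rho),\kappa_p(\sigma))=(p,\kappa_p(\sigma))=\sqrt{p^*}^{\sigma-1}$, whence $W^*(\rho)/\sqrt{p^*}$ is $\Gamma_E$-invariant.

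For $p=2$ the unit group $\Z_2^*$ has $(\Z_2^*)/(\Z_2^*)^2\cong(\Z/2)^2$, so the non-square classes are represented by $-1,2,-2$ and the relevant data is the image of $\kappa_2(\Gamma_E)$ in this quotient. Here $n\ge3$ forces $\kappa_2(\Gamma_E)\subset 1+8\Z_2\subset(\Z_2^*)^2$ and the symbol is trivial, giving $W^*(\rho)\in E^*$; the interesting thresholds are $n=2$, where $\kappa_2(\Gamma_E)\subset 1+4\Z_2$ and the only nontrivial quadratic character surviving is the one detecting $1+4\Z_2$ versus $(1+4\Z_2)\cap(\Z_2^*)^2=1+8\Z_2$, i.e. essentially $\sqrt{2}$-type behaviour, so $W^*(\rho)\in\mu_8E^*$ when $N\ff(\rho)$ is not a square — this is case (ii) — and $n=1$, where $\kappa_2(\Gamma_E)$ can already hit $-1$ and the symbol $(N\ff(\rho),-)$ need not be controlled at all, forcing the blanket exceptional case (iii) regardless of whether $N\ff(\rho)$ is a square (one should note $W^*(\rho)^2\in E^*$ still holds by Corollary~\ref{c1}d).

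The main obstacle I anticipate is the bookkeeping at $p=2$: one must carefully match up the three nontrivial square classes in $\Z_2^*/(\Z_2^*)^2$ with the filtration image $\kappa_2(\Gamma_E)$ at each value of $n$, and in case (ii) identify precisely which order-$8$ root of unity (or rather the coset $\mu_8 E^*$) is produced — the cleanest route is again via the lemma on $\sqrt{p^*}^{\sigma-1}$, here $\sqrt{2}^{\sigma-1}=(2,\kappa_2(\sigma))$, combined with the relation $(2,-1)=1$ already invoked in the text, to reduce the surviving symbol to a power of $\sqrt{2}$ and conclude $W^*(\rho)\in\Q(\sqrt2)E^*\subset\mu_8E^*$. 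Everything else is a direct reading-off from Corollary~\ref{c1}.
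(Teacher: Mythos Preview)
Your proposal is correct and follows exactly the approach the paper indicates: the paper's entire proof is the single sentence ``On reprend la d\'emonstration du corollaire \ref{c1}'', and you have carried out precisely that specialization, reducing Proposition~\ref{p1} to $W^*(\rho)^{\sigma-1}=(N\ff(\rho),\kappa_p(\sigma))$ and reading off the cases. Two small remarks: $N\ff(\rho)$ is always a pure power of $p$, so your phrase ``a non-square unit times $p$'' is slightly off (though the conclusion $(N\ff(\rho),-)=(p,-)$ is right), and in case~(iii) your own argument actually shows $W^*(\rho)\in E^*$ when $N\ff(\rho)$ is a square --- the statement is simply being conservative there.
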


\begin{proof} On reprend la d\'emonstration du corollaire \ref{c1}.
\end{proof}

\begin{rque} Supposons $\rho$ somme de repr\'esentations homog\`enes au sens de \cite[\S
4]{henniart}. Si
$p>2$, on a n\'ecessairement
$n>0$: en effet, la restriction d'une composante irr\'eductible au dernier saut de ramification
est une somme de caract\`eres de degr\'e $1$, tous \'egaux entre eux, sur un
$p$-groupe (\emph{loc.cit.}). Lorsque $p=2$ il existe des caract\`eres quadratiques sauvages,
par exemple pour
$K=\Q_2$ celui donn\'e par l'extension $K(\sqrt{2})/K$.
\end{rque}

\begin{cor}\label{c3}  Gardons les notations du corollaire \ref{c1}, et supposons que
$W^*(\rho)$ soit une racine de l'unit\'e. Soit $m$ le nombre de racines de l'unit\'e de
$E$. Alors
\[W^*(\rho)^m=
\begin{cases}
1 &\text{si $p\nmid m$}\\
\det_\rho(1+m) &\text{si $p$ est impair et $p\mid m$}\\
\det_\rho(1+m) &\text{si $p=2$ et $8\mid m$}\\
\det_\rho(1+m)(-1)^{v_2(N\ff(\rho))} &\text{si $p=2$ et $8\nmid m$.}
\end{cases}\]
\end{cor}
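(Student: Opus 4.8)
The plan is to revisit the proof of Corollary~\ref{c1}. Write $\zeta=W^*(\rho)$ and recall that for every $\sigma\in\Gamma_E$ one has
\[\zeta^{\sigma-1}=\det_\rho(\kappa_p(\sigma))\,(N\ff(\rho),\kappa_p(\sigma)),\]
and that the right-hand side lies in $\mu_m$: the values of $\det_\rho$ are roots of unity in $E=\Q(\rho)$, while the Hilbert symbol is $\pm1$ and $m$ is even. In particular $\sigma\mapsto\zeta^{\sigma-1}$ is a homomorphism $\Gamma_E\to\mu_m$; since $\mu_m$ is killed by $m$ one gets $(\zeta^m)^{\sigma-1}=1$ for all $\sigma\in\Gamma_E$, so $\zeta^m\in E$, and being a root of unity $\zeta^m\in\mu_m$. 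The whole point is then to identify this element of $\mu_m$.

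The core case is $p\mid m$. Put $n=v_p(m)$ (so $p^n$ is the number of $p$-power roots of unity in $E$) and $m=p^nm'$. By Corollary~\ref{c1}(a) (resp.\ (c) when $p=2$) together with the hypothesis that $\zeta$ is a root of unity, the prime-to-$p$ part $\zeta'$ of $\zeta$ lies in $\mu_{m'}\subseteq E$ (and $v_p(\operatorname{order}\zeta)\le n+1$, resp.\ $\le n+2$). When $p$ is odd with $n\ge1$, or $p=2$ with $n\ge2$, one checks that $\kappa_p(\Gamma_E)=1+p^n\Z_p$ (a number field between $\Q(\mu_{p^n})$ and, but not equal to, $\Q(\mu_{p^{n+1}})$ must be $\Q(\mu_{p^n})$); since $1+m=1+p^nm'\in 1+p^n\Z_p$, I can choose $\sigma_0\in\Gamma_E$ with $\kappa_p(\sigma_0)=1+m$. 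As $\zeta'\in E$ is fixed by $\sigma_0$ and $(\zeta')^{1+m}=\zeta'$ (because $(\zeta')^m=1$), while $\zeta_p^{\sigma_0}=\zeta_p^{1+m}$, one gets $\zeta^{\sigma_0}=\zeta^{1+m}$, hence
\[\zeta^m=\zeta^{\sigma_0-1}=\det_\rho(1+m)\,(N\ff(\rho),1+m),\]
the symbol being computed in $\Q_p$. It then suffices to evaluate it: for $p$ odd, $1+m\in1+p\Z_p\subseteq(\Z_p^*)^2$, so it is trivial; for $p=2$ with $8\mid m$, $1+m\equiv1\pmod 8$ is again a square and the symbol is trivial; for $p=2$ with $v_2(m)=2$ one has $1+m\equiv5\pmod 8$, and writing $N\ff(\rho)=2^su$ with $u$ a unit a short computation with the explicit $\Q_2$-symbol gives $(N\ff(\rho),1+m)=(-1)^s=(-1)^{v_2(N\ff(\rho))}$. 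This produces the second, third and fourth cases of the statement.

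There remain the two exceptional cases. For $p\nmid m$ ($n=0$), Corollary~\ref{c1}(b) gives $\zeta\in E(\mu_p)^*$ and $\zeta^{2d}\in E^*$, where $d=\pgcd(|\mu_{p-1}\cap E|,[E(\mu_p):E])$; combined with $\zeta$ a root of unity and elementary ramification considerations in cyclotomic fields — a root of unity of order prime to $p$ lying in $E(\mu_p)=E\cdot\Q(\mu_p)$ generates over $E$ an everywhere-unramified subextension of the cyclotomic extension $E(\mu_p)/E$, which together with $\zeta^{2d}\in E$ forces $\zeta\in E$ — one concludes $\zeta^m=1$. For $p=2$, $n=1$, where $\kappa_2(\Gamma_E)$ need not equal $1+2\Z_2$, one argues directly from Corollary~\ref{c1}(d): here $\det_\rho$ has image in $\mu_{2^\infty}\cap E=\mu_2$ on $\Z_2^*$, hence is quadratic there, and a case check on the $2$-part of $\zeta$ (which has order $\le4$) using the displayed formula for an appropriate $\sigma_0\in\Gamma_E$ recovers $\zeta^m=\det_\rho(1+m)(-1)^{v_2(N\ff(\rho))}$. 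The main obstacle is precisely the bookkeeping in these two cases, where $\kappa_p(\Gamma_E)$ is no longer a ``round'' subgroup of $\Z_p^*$ and one must feed in several parts of Corollary~\ref{c1} at once; the case $p\mid m$ above is the clean mechanism.
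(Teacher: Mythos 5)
Your central case is sound and is essentially the paper's own mechanism. For $p\mid m$ with $p$ odd, or $p=2$ with $4\mid m$, the paper likewise produces a $\sigma\in\Gamma_E$ whose cyclotomic character is $1+m$ (it arranges this modulo $pm$ via the kernel of $(\Z/pm)^*\to(\Z/m)^*$; you arrange it on the nose by showing $\kappa_p(\Gamma_E)=1+p^n\Z_p$, which is a correct and slightly cleaner variant), and then reads off $W^*(\rho)^m=\det_\rho(1+m)\,(N\ff(\rho),1+m)$ from the displayed formula of the proof of corollaire \ref{c1}, the symbol being $1$ when $1+m$ is a square in $\Q_p^*$ and $(-1)^{v_2(N\ff(\rho))}$ when $1+m\equiv 5\pmod 8$. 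Up to that point your write-up matches the paper.

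The two remaining cases, however, contain genuine gaps. (1) For $p\nmid m$, the implication you invoke --- ``$E(\zeta)/E$ is unramified at all finite places, which together with $\zeta^{2d}\in E$ forces $\zeta\in E$'' --- is simply not a valid principle: an everywhere-unramified abelian subextension of $E(\mu_p)/E$ generated by a root of unity can be nontrivial. Concretely, take $p\equiv 1\pmod 4$ and $E=\Q(\sqrt{-p})$: then $m=2$, $n=0$, $d=2$, and $i=\sqrt{-p}/\sqrt{p}\in E(\mu_p)\setminus E$, with $E(i)/E$ unramified at every finite place (it lies in $E(\mu_p)/E$, unramified outside $p$, and in $E(\mu_4)/E$, unramified outside $2$) and $i^{2d}=1\in E$. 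So nothing in your argument excludes $\zeta=\pm i$, for which $\zeta^m=-1$; some further input is needed to pin down the prime-to-$p$ part of $\zeta$, which is exactly what the paper supplies by asserting $W^*(\rho)\in\mu_{pm}$ and combining it with $W^*(\rho)^{2d}\in\mu_m$ and $\pgcd(2d,p)=1$. (2) The case $p=2$, $v_2(m)=1$ is not proved at all: ``a case check \dots recovers the formula'' is a promise, and this is precisely the case that carries the extra factor $(-1)^{v_2(N\ff(\rho))}$. In the paper it comes from choosing $\sigma\in\Gamma_E$ with $\kappa(\sigma)\equiv 1+m\pmod{2m}$ (the image of $\kappa(\Gamma_E)$ in $(\Z/2m)^*$ is nontrivial while its image in $(\Z/m)^*$ is trivial) and from the evaluation $(N\ff(\rho),1+m)_2=(-1)^{v_2(N\ff(\rho))}$ because $1+m\equiv 3\pmod 8$; you evaluate the symbol for $1+m\equiv 5\pmod 8$ but never for $3\pmod 8$, and you never exhibit the required $\sigma_0$ in the situation where $\kappa_2(\Gamma_E)$ is not $1+2\Z_2$. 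Both points need to be written out before the corollary can be considered proved.
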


\begin{proof} Supposons d'abord $p>2$. D'apr\`es le corollaire \ref{c1} a) et b), on a
$W^*(\rho)\in \mu_{pm}$. 

Si $p\nmid m$, on a aussi $W^*(\rho)^{2d}\in \mu_m$, o\`u $2d$ est premier \`a $p$. Donc
$W^*(\rho)\in \mu_{m}$, d'o\`u l'\'enonc\'e.

Supposons $p\mid m$.  L'image de $\kappa(\Gamma_E)$ dans $(\Z/m)^*$ (\emph{resp.}  dans $(\Z/pm)^*$) est
triviale (\emph{resp.}  non triviale) puisque $\mu_m\subset E$ (\emph{resp.}  $\mu_{pm}\not\subset E$).
Comme $\Ker\left((\Z/pm)^*\to (\Z/m)^*\right)$ est cyclique d'ordre $p$ engendr\'e par $1+m$,
on peut choisir
$\sigma\in
\Gamma_E$ tel que
$W^*(\rho)^\sigma = W^*(\rho)^{1+m}$ et $\kappa_p(\sigma)=1+m$. En particulier
$\kappa_p(\sigma)\equiv 1\pmod{p}$, donc $\kappa_p(\sigma)$ est un carr\'e. Alors l'\'enonc\'e
d\'ecoule de la proposition
\ref{p1}.

Supposons maintenant $p=2$. On peut encore choisir $\sigma$ comme ci-dessus. D'apr\`es le
corollaire \ref{c1} a), c) et d), on a  $W^*(\rho)\in
\mu_{4m}$. Soit $n=v_2(m)$. Si $n\ge 3$, 
$1+m\equiv 1\pmod{8}$ est un carr\'e dans $\Q_2$, donc le m\^eme calcul que ci-dessus est
valable. Si $n=2$ (\emph{resp.}  $n=1$), $1+m\equiv
5\pmod{8}$ (\emph{resp.}  $1+m\equiv 3\pmod{8}$): dans les deux cas on a $(N\ff(\rho),1+m)_2=
(-1)^{v_2(N\ff(\rho))}$, d'o\`u la formule dans ces cas. 
\end{proof}

\section{Op\'erations d'Adams}

\subsection{Rappel} Soit $G$ un groupe fini, et notons $R(G)$ l'anneau des repr\'esentations
complexes de $G$. Si $\rho\in R(G)$ est de caract\`ere $\chi$ et si $k\in\Z$, on note $\Psi^k
\chi$ la fonction centrale d\'efinie par
\[\Psi^k\chi(g) = \chi(g^k).\]

On d\'emontre (par exemple \cite[\S 9.1, ex. 3]{serre}) que c'est le caract\`ere d'une
repr\'esentation, not\'ee $\Psi^k\rho$.

Supposons maintenant $k$ premier \`a l'ordre de $G$. Alors $\Psi^k $ ne d\'epend que de $k$
modulo $e$, o\`u $e$ est l'exposant de $G$. On obtient ainsi une action de $(\Z/e)^*$ sur
$R(G)$.

D'autre part, les caract\`eres de $G$ prennent leurs valeurs dans $F=\Q(\mu_e)$. On a donc
aussi une action de $Gal(F/\Q)$ sur $R(G)$, et

\begin{lemme}\label{l2} Soit $\kappa:Gal(F/\Q)\iso (\Z/e)^*$ le caract\`ere
cy\-clo\-to\-mi\-que. Pour tout $\rho\in R(G)$ et tout $\sigma\in Gal(F/\Q)$, on a $\rho^\sigma
=\Psi^{\kappa(\sigma)}\rho$.\qed
\end{lemme}

Ceci montre que $\Psi^k\rho$ ne d\'epend que de l'image de $k$ dans
$\hat\Z^*/\kappa(\Gamma_E)$, o\`u $E=\Q(\rho)$.

\subsection{Action des op\'erations d'Adams sur les constantes locales} En combinant le lemme
\ref{l2} et la proposition \ref{p1}, on obtient:

\begin{prop}\label{p2} Soit $\rho$ une repr\'esentation galoisienne complexe de $K$. Soit $L/K$ une extension finie galoisienne par laquelle se factorise $\rho$, et soit $k$ un entier premier \`a $d=[L:K]$. Notons $d_p$ la plus grande puissance de $p$ divisant $d$. Alors
\[W^*(\Psi^k \rho)= W^*(\rho)^\sigma \det_\rho(k_p)^{-k}
(N\ff(\rho),k_p)\] 
o\`u $\sigma\in Gal(\Q(\mu_d)/\Q)$ est un \'el\'ement de caract\`ere
cyclotomique congru \`a $k$ modulo $d$, et $k_p$ est la projection de $k$ dans $(\Z/d_p)^*$.\qed
\end{prop}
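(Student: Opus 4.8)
The plan is to translate everything into Galois-theoretic language so that Proposition \ref{p1} applies directly, using Lemma \ref{l2} as the dictionary between Adams operations and the cyclotomic Galois action. First I would observe that since $\rho$ factors through $\Gamma := \operatorname{Gal}(L/K)$, its character takes values in $F = \Q(\mu_e)$, where $e$ is the exponent of $\Gamma$; in particular $\Q(\rho) \subseteq F$, so I may harmlessly work with the integer $e$ (which divides $d$) or with $d$ itself. Since $k$ is prime to $d$, it is prime to $e$, hence $\Psi^k$ is well-defined on $R(\Gamma)$ and, by the discussion following Lemma \ref{l2}, depends only on $k \bmod e$. Choose $\sigma \in \operatorname{Gal}(\Q(\mu_d)/\Q)$ with cyclotomic character $\equiv k \pmod d$; its restriction to $F$ has cyclotomic character $\equiv k \pmod e$, and Lemma \ref{l2} gives $\rho^\sigma = \Psi^{\kappa(\sigma)}\rho = \Psi^k\rho$.

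Next I would simply substitute this identity $\Psi^k\rho = \rho^\sigma$ into Proposition \ref{p1}. That proposition, applied with this $\sigma$, reads
\[
W^*(\rho)^\sigma = W^*(\rho^\sigma)\,\det{}_\rho(\kappa_p(\sigma))^\sigma\,(N\ff(\rho),\kappa_p(\sigma)),
\]
so rearranging,
\[
W^*(\Psi^k\rho) = W^*(\rho^\sigma) = W^*(\rho)^\sigma\,\det{}_\rho(\kappa_p(\sigma))^{-\sigma}\,(N\ff(\rho),\kappa_p(\sigma)),
\]
using that the Hilbert symbol is $\pm 1$ and hence fixed by $\sigma$. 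It remains to identify the two correction terms with the ones in the statement. For the Hilbert symbol: $\kappa_p(\sigma)$ is the image of $\kappa(\sigma)$ in $\Z_p^*$, and since $\kappa(\sigma) \equiv k \pmod d$ with $d_p \mid d$, we get $\kappa_p(\sigma) \equiv k \equiv k_p \pmod{d_p}$; the Hilbert symbol $(N\ff(\rho), u)_p$ depends only on $u$ modulo a power of $p$ dividing $d_p$ (it factors through $\Z_p^*/(\Z_p^*)^2$ at worst, and $N\ff(\rho)$ has bounded valuation), so $(N\ff(\rho),\kappa_p(\sigma)) = (N\ff(\rho),k_p)$. One should be slightly careful here for $p=2$, but the same congruence $\kappa_2(\sigma)\equiv k \pmod{d_2}$ combined with $v_2(N\ff(\rho))$ being all that matters (as in the proof of Corollary \ref{c3}) settles it.

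The only genuinely delicate point is the determinant term: I must show $\det_\rho(\kappa_p(\sigma))^{-\sigma} = \det_\rho(k_p)^{-k}$. Here $\det_\rho$ is a finite-order character of $K^*$ (via class field theory) with values in $F$, so $\det_\rho(\kappa_p(\sigma))$ is a root of unity whose order divides $e$, and applying $\sigma$ to it raises it to the power $\kappa(\sigma) \equiv k \pmod e$; thus $\det_\rho(\kappa_p(\sigma))^{\sigma} = \det_\rho(\kappa_p(\sigma))^k$. On the other hand $\det_\rho$, restricted to $\Z_p^* \subset K^*$, factors through a finite $p$-power quotient, so $\det_\rho(\kappa_p(\sigma)) = \det_\rho(k_p)$ by the congruence $\kappa_p(\sigma) \equiv k_p \pmod{d_p}$ established above. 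Combining, $\det_\rho(\kappa_p(\sigma))^{-\sigma} = \det_\rho(k_p)^{-k}$, which is exactly what is needed. I expect this determinant bookkeeping — keeping straight that the outer $\sigma$ becomes a $k$-th power while the inner argument reduces from $\kappa_p(\sigma)$ to $k_p$ — to be the main place where care is required; everything else is a direct citation of Proposition \ref{p1} and Lemma \ref{l2}.
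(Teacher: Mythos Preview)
Your proposal is correct and follows exactly the route the paper takes: the paper's entire proof is the single phrase ``En combinant le lemme \ref{l2} et la proposition \ref{p1}'' followed by \qed, and you carry out precisely that combination, using Lemma \ref{l2} to rewrite $\Psi^k\rho$ as $\rho^\sigma$ and then rearranging the identity of Proposition \ref{p1}. Your identification of the determinant bookkeeping as the one place requiring care is apt; note only that your assertion that $\det_\rho|_{\Z_p^*}$ ``factors through a finite $p$-power quotient'' is not literally true in general (think of a tame character of order $p-1$), so the matching of $\det_\rho(\kappa_p(\sigma))$ with $\det_\rho(k_p)$ and of the Hilbert symbols really relies on the freedom in lifting $\sigma$ to $\Gamma$ and on the fact that the full right-hand side is independent of that lift --- a point the paper itself leaves implicit.
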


On en d\'eduit le corollaire suivant, qui pr\'ecise le corollaire \ref{c3}:

\begin{cor}\label{c4} Gardons les notations de la proposition \ref{p2} et supposons que
$W^*(\rho)$ soit une racine de l'unit\'e.  Si $k$ est premier \`a $pd$ (en particulier si $p$
divise $d$), on a
\[W^*(\Psi^k \rho)= W^*(\rho)^k \det_\rho(k_p)^{-k}
(N\ff(\rho),k_p).\]
En particulier, si $k\in \kappa(\Gamma_E)$, on a
\[W^*(\rho)^{k-1}= \det_\rho(k_p)
(N\ff(\rho),k_p).\]
\end{cor}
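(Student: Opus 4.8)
The plan is to read the first formula off Proposition~\ref{p2} and to obtain the ``in particular'' part as a one-line specialization of it. Proposition~\ref{p2} already gives
\[W^*(\Psi^k \rho)= W^*(\rho)^\sigma \det_\rho(k_p)^{-k}(N\ff(\rho),k_p)\]
with $\kappa(\sigma)\equiv k\pmod d$, so everything comes down to the identity $W^*(\rho)^\sigma=W^*(\rho)^k$, and this is where the extra hypothesis (that $k$ be prime to $pd$, not merely to $d$) enters. Since $W^*(\rho)$ is assumed to be a root of unity, $\sigma$ acts on it through the cyclotomic character modulo its order $N$, so it is enough to arrange that $\kappa(\sigma)\equiv k$ also modulo $N$. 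One bounds $N$ by Corollary~\ref{c1}: with $E=\Q(\rho)\subseteq\Q(\mu_d)$ and $p^n$ the number of $p$-power roots of unity in $E$, that corollary puts $W^*(\rho)$ in $\mu_{p^{n+1}}E^*$ (resp.\ in $\mu_{2^{n+2}}E^*$ if $p=2$); as $W^*(\rho)$ is a root of unity, its $E^*$-component is one too, hence lies in $\mu_m$ with $m$ the number of roots of unity of $E$, and $m\mid d$ or $m\mid 2d$ since $E\subseteq\Q(\mu_d)$. Thus $N$ divides $p^{n+1}m$ (resp.\ $2^{n+2}m$). One then chooses the lift $\sigma\in\Gamma$ of the canonical element of $Gal(\Q(\mu_d)/\Q)$ so that moreover $\kappa(\sigma)\equiv k\pmod{Nd}$: this changes nothing in the displayed formula, since its last two factors depend on $\sigma$ only through $\kappa_p(\sigma)$ modulo $d_p$ (for $\det_\rho$) and through the square class of $\kappa_p(\sigma)$ in $\Q_p^*$ (for the Hilbert symbol), both already determined by $\kappa(\sigma)\equiv k\pmod d$ once $d_p\neq1$. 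With such a $\sigma$ one has $W^*(\rho)^\sigma=W^*(\rho)^k$, whence the first formula; note that when $p\mid d$ the hypothesis ``$k$ prime to $pd$'' is automatic, which is why it is singled out.

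For the ``in particular'' part, assume $k\in\kappa(\Gamma_E)$, i.e.\ that $k$ fixes $E=\Q(\rho)$. Taking $\sigma\in\Gamma_E$ with $\kappa(\sigma)=k$, Lemma~\ref{l2} gives $\Psi^k\rho=\rho^\sigma=\rho$, hence $W^*(\Psi^k\rho)=W^*(\rho)$; substituting this into the first formula and rearranging yields $W^*(\rho)^{k-1}=\det_\rho(k_p)^k(N\ff(\rho),k_p)$, the Hilbert symbol being its own inverse. Finally $\det_\rho$ is obtained from $\rho$ by a $\Q$-rational operation, so its values lie in $\Q(\rho)=E$; being a root of unity in $E$, $\det_\rho(k_p)$ is fixed by $x\mapsto x^k$, so $\det_\rho(k_p)^k=\det_\rho(k_p)$ and the formula collapses to the asserted one.

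The main obstacle is precisely the identity $W^*(\rho)^\sigma=W^*(\rho)^k$: one must be sure that the order of $W^*(\rho)$ records no information beyond $k$ modulo $pd$. Its $p$-primary part is exactly what the $\mu_{p^{n+1}}$ (resp.\ $\mu_{2^{n+2}}$) of Corollary~\ref{c1} controls, and is what forces the ``prime to $p$'' clause; its prime-to-$p$ part is governed by the roots of unity of $E\subseteq\Q(\mu_d)$. Some extra care is needed at the prime $2$ — when $d$ is odd, or when $p=2$ — where one either exploits that the relevant factors then have odd order away from $p$, or adjusts the lift $\sigma$ so that $\kappa_p(\sigma)$ lands in the correct class modulo $8$. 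Everything else is routine bookkeeping on top of Proposition~\ref{p2} and Lemma~\ref{l2}.
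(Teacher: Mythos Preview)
Your approach is essentially the one the paper has in mind: the corollary is stated right after Proposition~\ref{p2} with no proof beyond ``on en d\'eduit'', and the passage from $W^*(\rho)^\sigma$ to $W^*(\rho)^k$ under the root-of-unity hypothesis, together with the specialization $\Psi^k\rho=\rho$ for $k\in\kappa(\Gamma_E)$, is exactly what is intended. Your use of Corollary~\ref{c1} to bound the order of $W^*(\rho)$ (so that a lift $\sigma$ with $\kappa(\sigma)\equiv k$ modulo that order exists once $k$ is prime to $pd$) and your reduction $\det_\rho(k_p)^k=\det_\rho(k_p)$ via $\mu_m\subset E$ are both correct and constitute a more careful write-up than the paper itself provides; the residual fussiness you flag at the prime~$2$ when $pd$ is odd is a genuine edge case the paper also leaves implicit.
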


Voici un exemple amusant:

\begin{cor} \label{c5} Dans le corollaire \ref{c4}, supposons que $\mu_p\subset E$ mais
$\mu_{p^2}\not\subset E$ (par exemple que $\rho$ soit un caract\`ere d'ordre
$p$). Alors
\[W^*(\Psi^k \rho)= W^*(\rho)^{k^p}(N\ff(\rho),k_p).\]
\end{cor}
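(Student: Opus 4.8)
Corollary \ref{c5} is the specialization of Corollary \ref{c4} to the case $\mu_p\subset E$, $\mu_{p^2}\not\subset E$, so the strategy is simply to feed this hypothesis into the formula of Corollary \ref{c4} and simplify the term $\det_\rho(k_p)^{-k}$. The starting point is the first display of Corollary \ref{c4}, valid since $k$ is prime to $pd$:
\[W^*(\Psi^k \rho)= W^*(\rho)^k \det_\rho(k_p)^{-k}(N\ff(\rho),k_p).\]
So the whole content is to show that, under the present hypothesis, $W^*(\rho)^k\det_\rho(k_p)^{-k} = W^*(\rho)^{k^p}$, i.e. that $\det_\rho(k_p)^{-k}=W^*(\rho)^{k^p-k}$.

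First I would observe what the hypothesis $\mu_p\subset E$, $\mu_{p^2}\not\subset E$ gives: in the notation of Corollary \ref{c1} this is the case $n=1$ (for $p$ odd), so by Corollary \ref{c1}(a) we have $W^*(\rho)\in\mu_{p^2}E^*$, hence $W^*(\rho)^{p}\in E^*$; combined with $W^*(\rho)$ being a root of unity and $\mu_p$ (but not $\mu_{p^2}$) lying in $E$, this forces $W^*(\rho)^{p}\in\mu_m$ with $p\mid m$ but the $p$-part controlled, and more precisely $W^*(\rho)$ has $p$-primary order dividing $p^2$. Next I would apply the second display of Corollary \ref{c4}: for $k\in\kappa(\Gamma_E)$ one has $W^*(\rho)^{k-1}=\det_\rho(k_p)(N\ff(\rho),k_p)$. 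The key point is that here $\kappa(\Gamma_E)$, read modulo the relevant $p$-power, is exactly $1+p\Z_p$ (since $\mu_p\subset E$ kills the mod-$p$ cyclotomic character but $\mu_{p^2}\not\subset E$ leaves the next layer), so $k_p\in 1+p\Z_p$ for such $k$, and then $\det_\rho(k_p)$ makes sense as an element of $\mu_p$ because $\det_\rho(1+p\Z_p)\subset\mu_{p}$ (as in the proof of Corollary \ref{c1}(a) with $n=1$).

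The computation I expect to be the crux is the identity $\det_\rho(k_p)^{-k}=W^*(\rho)^{k^p-k}$. The idea is: by Corollary \ref{c4}'s second display applied with a suitable $\sigma\in\Gamma_E$ whose cyclotomic character is $\equiv k_p$ in $1+p\Z_p/(1+p^2\Z_p)$ — more carefully, one uses that the map $u\mapsto\det_\rho(u)$ on $1+p\Z_p$ factors through $(1+p\Z_p)/(1+p^2\Z_p)\cong\Z/p$, on which raising to the $k$-th power is the same as raising to the $k\bmod p$-th power, and similarly $W^*(\rho)^{\,\cdot\,}$ is controlled by exponents mod $p^2$. Writing $k^p\equiv k\pmod{p}$ but tracking the exact power, Fermat/lifting-the-exponent gives $k^p-k\equiv 0\pmod p$ and one checks the two sides of $\det_\rho(k_p)^{-k}=W^*(\rho)^{k^p-k}$ agree after identifying both with elements of $\mu_p$ via the relation $W^*(\rho)^{k-1}=\det_\rho(k_p)(N\ff(\rho),k_p)$ iterated. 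Concretely: from $W^*(\rho)^{k-1}=\det_\rho(k_p)(N\ff(\rho),k_p)$, raising to a power and using multiplicativity of $\det_\rho$ and of the Hilbert symbol in $k_p$, one telescopes $W^*(\rho)^{k^p-1}$ as a product of $\det_\rho$ and Hilbert-symbol terms; comparing with the direct expression for $W^*(\Psi^k\rho)$ and cancelling the common factor $(N\ff(\rho),k_p)$ (noting $k_p\in 1+p\Z_p$ is a square for $p$ odd, so that symbol is $1$ — which is why only one Hilbert symbol survives in the final formula and it is $(N\ff(\rho),k_p)$ coming from the outer application, not from the inner telescoping) yields the claimed $W^*(\Psi^k\rho)=W^*(\rho)^{k^p}(N\ff(\rho),k_p)$.

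Thus the main obstacle is purely the bookkeeping of $p$-adic exponents: showing that $\det_\rho(k_p)^{-k}$, an element of $\mu_p$, equals $W^*(\rho)^{k^p-k}$ by repeatedly invoking the relation $W^*(\rho)^{k-1}=\det_\rho(k_p)(N\ff(\rho),k_p)$ from Corollary \ref{c4} together with $k_p\in 1+p\Z_p$ being a square and $\det_\rho$ being trivial on $1+p^2\Z_p$. Everything else is substitution into Corollary \ref{c4}.
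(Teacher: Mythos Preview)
Your overall strategy matches the paper's: start from the first formula of Corollary~\ref{c4} and eliminate $\det_\rho(k_p)$ via the second formula, using $\kappa_p(\Gamma_E)=1+p\Z_p$. But your execution misses the clean step and at one point slips into applying the second formula to $k$ itself (``from $W^*(\rho)^{k-1}=\det_\rho(k_p)(N\ff(\rho),k_p)$''), which is not available: the $k$ of Corollary~\ref{c5} is only assumed prime to $pd$, not to lie in $\kappa(\Gamma_E)$. The vague talk of ``telescoping'' and ``iterating'' that relation never gets off the ground for this reason.

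The paper's proof avoids all iteration. It applies the second formula of Corollary~\ref{c4} exactly once, with $k^{p-1}$ in the role of $k$: since $k_p^{p-1}\in 1+p\Z_p=\kappa_p(\Gamma_E)$ by Fermat, this is legitimate, and since $k_p^{p-1}$ is a square the Hilbert symbol vanishes, giving
\[
\det_\rho(k_p^{p-1})=W^*(\rho)^{k^{p-1}-1}.
\]
Combined with $\det_\rho(k_p)^p=1$ (so that $\det_\rho(k_p)=\det_\rho(k_p)^{1-p}=\det_\rho(k_p^{p-1})^{-1}$), one obtains $\det_\rho(k_p)=\bigl(W^*(\rho)^{k^{p-1}-1}\bigr)^{-1}$; substituting into the first formula of Corollary~\ref{c4} and using $k+k(k^{p-1}-1)=k^p$ finishes in one stroke. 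The two ingredients you should isolate are thus (i) applying the second formula to $k^{p-1}$ rather than to $k$, and (ii) the relation $\det_\rho(k_p)^p=1$; you gesture at facts adjacent to (ii) (e.g.\ $\det_\rho(1+p\Z_p)\subset\mu_p$) but never state it, and (i) is absent from your plan.
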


\begin{proof} L'hypoth\`ese signifie que $\kappa_p(\Gamma_E)=1+p\Z_p$. En particulier
$k_p^{p-1}\in \kappa_p(\Gamma_E)$ et d'apr\`es la seconde formule du corollaire
\ref{c4}:
\[\det_\rho(k_p)=\det_\rho(k_p)^{1-p}=\det_\rho(k_p^{p-1})^{-1} =
\left(W^*(\rho)^{k^{p-1}-1}\right)^{-1}.\]

En reportant dans la premi\`ere formule du corollaire \ref{c4}, on trouve
\begin{multline*}
W^*(\Psi^k \rho)= W^*(\rho)^k \left(W^*(\rho)^{k^{p-1}-1}\right)^k
(N\ff(\rho),k_p)\\
=W^*(\rho)^{k^p}
(N\ff(\rho),k_p).
\end{multline*}
\end{proof}

\begin{rque} Si $\rho$ est un caract\`ere d'ordre $p$, la formule  du corollaire \ref{c5}
s'\'etend aux valeurs de $k$ divisibles par $p$ gr\^ace au corollaire \ref{c3} (en posant
$k_p=1$). Ce genre de formule ne semble pas se g\'en\'eraliser \`a des caract\`eres d'ordre
$p^2$ (voir proposition \ref{p3}).

Dans le num\'ero suivant on dira quelque chose sur l'action d'une op\'eration d'Adams $\Psi^k$
lorsque $k$ divise l'ordre d'un groupe de d\'efinition de $\rho$ dans le cas o\`u $p>2$ et $K$
est absolument non ramifi\'e, \emph{cf.} corollaire \ref{c6}.
\end{rque}

\section{Exemple: caract\`eres logarithmiques}\label{s4}

Dans \cite{GK}, une exponentielle tronqu\'ee pointe le bout du nez (pour $p=2$); elle appara\^\i t
explicitement dans \cite{dh} pour des raisons diff\'erentes. Nous allons utiliser ici un ``vrai" logarithme.

Soient $U$ le groupe des unit\'es de $K$ et $U_1$ le groupe des unit\'es principales. Rappelons
que la fonction logarithme converge sur $U_1$ et d\'efinit un homomorphisme continu
\[\log:U_1\to K\]
de noyau les racines $p$-primaires de l'unit\'e. Il est habituel (Iwasawa) de prolonger $\log$
en un homomorphisme sur
$K^*$ tout entier comme suit:

\begin{itemize}
\item $\log\zeta=0$ si $\zeta$ est une racine de l'unit\'e (cette condition est
n\'e\-ces\-sai\-re);
\item Soit $x\in K^*$. Si $e$ est l'indice de ramification absolu de $K$, on a 
$x^e=p^nu$ avec
$u\in U$ et $n\in \Z$. Alors
$\log x = \frac{1}{e}\log u$.
\end{itemize}

\begin{defn} Soit $\alpha\in K$. Pour $x\in K^*$, on note
\[\chi_\alpha(x)=\psi_K(\alpha\log x).\]
C'est le \emph{caract\`ere logarithmique} attach\'e \`a $\alpha$.
\end{defn}

Tout caract\`ere logarithmique est trivial sur les racines de l'unit\'e et les puissances
fractionnaires de $p$, et prend des valeurs
$p$-primaires. R\'eciproquement:

\begin{lemme} L'homomorphisme
\begin{align*}
K&\to \Hom(K^*/(\mu(K)+\langle p\rangle^\Q\cap K^*),\mu_{p^\infty}(\C))\\
\alpha&\mapsto \chi_\alpha
\end{align*}
est surjectif.
\end{lemme}

\begin{proof} C'est clair par dualit\'e additive, puisque l'image de $\log$ est un sous-groupe
ouvert de $K$.
\end{proof}

Malheureusement le noyau de $\alpha\mapsto \chi_\alpha$ d\'epend fortement de la ramification
absolue de $K$. De m\^eme il est difficile d'\'evaluer le conducteur de $\chi_\alpha$ en
g\'en\'eral. Pour ces raisons, je me limite maintenant au cas o\`u
$K/\Q_p$ est
\emph{non ramifi\'e}.\footnote{Cette restriction n'est peut-\^etre pas trop d\'eraisonnable:
\'etant donn\'e la propri\'et\'e  d'inductivit\'e de $W$, il suffit en fait d'\'etudier ces
constantes dans le cas particulier $K=\Q_p$. C'est aussi raisonnable d'un point de vue
motivique.} On a alors un r\'esultat assez agr\'eable (en tout cas pour $p>2$):

\begin{prop}\label{p3} Supposons $K$ non ramifi\'e sur $\Q_p$. \\
a) Pour $\alpha\in K$ on a 
$\chi_\alpha=1$ $\iff$ $|2\alpha|\le p$. Si $|2\alpha|> p$, $\chi_\alpha$ est de conducteur
$(\alpha^{-1})$ et d'ordre
$|2\alpha|/p$.\\ 
b) Si $p>2$, on a:
\[W(\chi_\alpha)=G(\alpha) \psi_K(\alpha(1-\log \alpha))\]
o\`u $G(\alpha)$ est la somme de Gauss quadratique normalis\'ee
de \cite[p. 352]{GK}  (racine $4$-i\`eme de l'unit\'e).\\
b) Si $p=2$ et $v(\alpha)$ est impair, on a
\[W(\chi_\alpha)=G(\chi_\alpha) \psi_K(\alpha(1-\log \alpha))\]
o\`u $G(\chi_\alpha)$ est la somme de Gauss quadratique normalis\'ee
de \cite[p. 352]{GK}  (racine $8$-i\`eme de l'unit\'e). Si $v(\alpha)$ est pair et $\le -6$, on
a
\[W(\chi_\alpha)= \psi_K(\alpha(1-\log \alpha)-2^{n-3}\alpha^{2F^{-1}-1})\]
o\`u $F$ est l'automorphisme de Frobenius absolu de $K$. (Voir la d\'e\-mons\-tra\-tion pour une
formule dans le cas $v(\alpha)=-4$.)
\end{prop}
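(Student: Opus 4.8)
The plan is to compute $W(\chi_\alpha)=\epsilon(\chi_\alpha,\psi_K,dx,1/2)$ directly from Tate's local formula for abelian $\epsilon$-factors, since $\chi_\alpha$ has degree $1$. First I would establish part a): the image of $\log$ on $K^*$ is $\alpha_0 := \mathfrak p^{v(2)+1}$ when $K/\Q_p$ is unramified (for $p>2$ this is $\mathfrak p$, for $p=2$ it is $4\Z_2$ inside $\Q_2$ and $4\mathcal O_K$ in general), because $\log$ is an isomorphism of $U_1$ onto its image modulo $p$-power roots of unity, with $\log(1+\mathfrak p^m)=\mathfrak p^m$ for $m>v(2)$ and $\log$ contracting by a factor of $p$ at the boundary. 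Hence $\chi_\alpha=\psi_K(\alpha\log(\cdot))$ is trivial iff $\alpha\cdot(\text{image of }\log)\subset\Z_p$ (the kernel of $\psi_{\Q_p}$ on $\Q_p$, pulled back by $\Tr_{K/\Q_p}$, is the inverse different $=\mathcal O_K$ in the unramified case), which unwinds to $|2\alpha|\le p$. The conductor computation is the same kind of bookkeeping: $\chi_\alpha$ is nontrivial on $U_n$ exactly when $\alpha\mathfrak p^n\not\subset\mathfrak p^{v(2)+1}$ after pairing, giving conductor exponent $n=-v(\alpha)+v(2)+1$, i.e. conductor $(\alpha^{-1})$ once one absorbs the $2$, and the order is $|U_1/\Ker\log\cap\ldots|$ which works out to $|2\alpha|/p$.

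Next, for the epsilon-factor itself, I would use the standard Tate/Langlands formula: for a ramified character $\chi$ of $K^*$ with conductor $\mathfrak p^n$ and $\gamma\in K^*$ of valuation $n+d(\psi_K)$ (here $d(\psi_K)=0$ since $K/\Q_p$ is unramified and $\psi_K$ has conductor $\mathcal O_K$),
\[
\epsilon(\chi,\psi_K,dx,1/2)=\chi(\gamma)^{-1}q^{-n/2}\sum_{u\in (\mathcal O/\mathfrak p^n)^*}\chi(u)^{-1}\psi_K(u/\gamma).
\]
The crucial trick, already visible in \cite{GK} and \cite{dh}, is that for a \emph{logarithmic} character one can choose $\gamma$ cleverly, namely $\gamma$ close to $\alpha^{-1}$, so that $\chi_\alpha(u)^{-1}\psi_K(u/\gamma)=\psi_K(-\alpha\log u + u/\gamma)$ and the exponent's critical point (where the derivative of $-\alpha\log u + u\gamma^{-1}$ in $u$ vanishes) sits at $u=\alpha\gamma$; choosing $\gamma=\alpha^{-1}$ puts it at $u=1$. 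A Taylor/stationary-phase expansion of $-\alpha\log u = -\alpha\log(1+t) = -\alpha t + \tfrac{\alpha}{2}t^2 - \tfrac{\alpha}{3}t^3+\cdots$ around $u=1$ then shows the Gauss sum degenerates: the linear term is killed by the choice of $\gamma$, the quadratic term $\tfrac{\alpha}{2}t^2$ survives and produces a \emph{quadratic} Gauss sum $G(\alpha)$ (a fourth root of unity when $p>2$, an eighth root when $p=2$ and $v(\alpha)$ odd), and the constant term contributes $\psi_K(\alpha(1-\log\alpha))$ — the $1$ coming from the value $u/\gamma = \alpha$ at the critical point combined with $-\alpha\log\gamma^{-1}=\alpha\log\alpha$, rearranged. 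For $p>2$ the cubic and higher terms $\tfrac{\alpha}{k}t^k$ with $t\in\mathfrak p$ have valuation $\ge v(\alpha)+2v(t) \ge$ large enough (since $v(\alpha)<0$ but $2v(t)\ge 2$ and we only need $t$ in a range where $v(\alpha t^2)$ is bounded) to lie in the kernel of $\psi_K$, so they drop out cleanly; this is why the $p>2$ case is "agréable". This yields b) for $p>2$.

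For $p=2$ the same stationary-phase idea applies but the error analysis is genuinely harder, which I expect to be the main obstacle. When $v(\alpha)$ is odd the quadratic Gauss sum $G(\chi_\alpha)$ is an eighth root of unity (the extra $2$-adic subtlety in $t\mapsto t^2$ on $\F_2$-algebras), and the cubic term $-\tfrac{\alpha}{3}t^3$ is still negligible because $v(2\alpha)$ odd forces a parity mismatch; so one again gets $W(\chi_\alpha)=G(\chi_\alpha)\psi_K(\alpha(1-\log\alpha))$. But when $v(\alpha)$ is even, the quadratic term $\tfrac{\alpha}{2}t^2$ has $v(\alpha/2)$ even and the resulting "Gauss sum" over $\F_2$ is trivial (a square-in-$\F_2$-algebra phenomenon), so no $G$-factor appears; instead the \emph{cubic and quartic} terms no longer vanish and must be carried along. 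Writing $v(\alpha)=-2m$ (so $n=v(2)+2m+1$ in the mixed normalization, but in the unramified-over-$\Q_2$ case with $q=2^f$ the relevant power is $2^{n-3}$), the surviving correction from $-\tfrac{\alpha}{3}t^3+\tfrac{\alpha}{4}t^4-\cdots$, after completing the square in $t$ to eliminate the now-degenerate quadratic piece, collapses to a single monomial $\psi_K(-2^{n-3}\alpha^{2F^{-1}-1})$, where the Frobenius $F$ enters because on $\F_{2^f}$ the squaring map is the Frobenius and "dividing by $2$ then taking the relevant square root" in the $t^2$-term forces an application of $F^{-1}$ — this is exactly the point at which the unramifiedness hypothesis is used essentially. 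The case $v(\alpha)=-4$ is a boundary case where a quartic term also contributes before being absorbed, hence the parenthetical deferral; and $v(\alpha)\ge -2$ even is already covered by part a) ($\chi_\alpha=1$ or trivially small). I would organize the $p=2$ argument as: (1) reduce to $K=\Q_2$ by inductivity is \emph{not} available here since $\chi_\alpha$ lives on $K^*$ directly, so instead (1') fix $\gamma=\alpha^{-1}$ and expand; (2) isolate the constant term $\psi_K(\alpha(1-\log\alpha))$; (3) analyze the remaining sum $\sum_t \psi_K\big(-\alpha\sum_{k\ge2}\tfrac{(-1)^{k-1}}{k}t^k\big)$ by splitting $t$ according to $v(t)$ and using that $\psi_K$ kills $\mathcal O_K$; (4) for $v(\alpha)$ even, perform the $2$-adic completion of the square to pull out the monomial $-2^{n-3}\alpha^{2F^{-1}-1}$ and check the leftover sum is a full Gauss sum over $\F_{2^f}$ that evaluates to $1$ after normalization at $s=1/2$.
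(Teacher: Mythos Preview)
Your overall plan---choose $\gamma=\alpha^{-1}$ and do a stationary-phase analysis of Tate's Gauss sum---is sound, but the paper short-circuits almost all of it by quoting the ready-made formula of G\'erardin--Kutzko \cite[1.4]{GK} (cf.\ \cite[prop.~1]{tate}):
\[
W(\chi_\alpha)=G(\chi_\alpha)\,\chi_\alpha(d)\,\psi_K(d^{-1}),
\]
valid for any $d$ with $\chi_\alpha(1+x)=\psi_K(d^{-1}x)$ for all $x$ of valuation $\ge c-[c/2]$. This lemma has already absorbed the quadratic Gauss sum into $G(\chi_\alpha)$ and localized the problem to the \emph{half-level}, so only a linear approximation of $\log(1+x)$ is needed. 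For $p>2$, or $p=2$ with $c$ odd, $\log(1+x)\equiv x$ on that range and $d^{-1}=\alpha$ works on the nose; then $\chi_\alpha(d)\psi_K(d^{-1})=\psi_K(\alpha(1-\log\alpha))$ immediately. Your direct computation would rederive this, but at the cost of redoing the GK reduction.

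For $p=2$ with $c=n$ even your diagnosis is off. The cubic and quartic terms in the Taylor expansion of $\log(1+x)$ are \emph{negligible} on the half-level $v(x)\ge n/2$ (indeed $v(\alpha x^3)\ge n/2\ge 2$). What survives is the quadratic term: $\log(1+x)\equiv x-\tfrac{x^2}{2}\pmod{2^n\mathcal O_K}$. Writing $x=2^{n/2}u$, the paper linearizes $\psi_K(\alpha x^2/2)=\psi_K(au^2/2)$ via $u^2\equiv u^F$ on the residue field, yielding $d^{-1}=\alpha-2^{n/2-1}\alpha^{F^{-1}}$. The Frobenius correction $-2^{n-3}\alpha^{2F^{-1}-1}$ then emerges not from higher Taylor terms of the original sum, but from the \emph{second-order} term in the expansion of $\log d^{-1}=\log\alpha+\log(1-2^{n/2-1}\alpha^{F^{-1}-1})$ when you compute $\chi_\alpha(d)$. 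So the mechanism is: shift $d$ to kill the surviving quadratic, then the shift feeds back through $\chi_\alpha(d)=\psi_K(\alpha\log d)$. Your proposed step (4) would need to be reorganized along these lines; the ``completion of the square plus leftover cubic/quartic'' picture does not match what actually happens.
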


\begin{proof} a) d\'ecoule du fait que $\log K^*=2pO_K$ et que
$v(\log(1+x))=v(x)$ si
$v(x)>
\frac{e}{p-1}$ o\`u $e$ est l'indice de ramification absolu (ici, $e=1$). Pour b) et c),
on utilise la formule de \cite[1.4]{GK}
\[W(\chi_\alpha)=G(\chi_\alpha)\chi_\alpha(d)\psi_K(d^{-1})\]
pour un $d\in K$ tel que
\[\chi_\alpha(1+x) = \psi_K(d^{-1}x)\]
pour tout $x$ tel que $v(x)\ge c - [c/2]$, o\`u $c=v(\ff(\chi_\alpha))$ et $G(\chi_\alpha)$ est
une racine $4$-i\`eme ou $8$-i\`eme de l'unit\'e, valant $1$ pour $c$ pair (\emph{cf.}
\cite[prop. 1]{tate}). Si $p>2$, $G(\chi_\alpha)$ ne d\'epend que de $d$ \cite[p. 352]{GK}.

Pour $p>2$, on voit tout de suite que
$d^{-1}=\alpha$ convient, et la formule r\'esulte alors de la d\'efinition de $\chi_\alpha$.
Pour $p=2$, on s'int\'eresse \`a $\psi_K(\alpha\log(1+x))$. \'Ecrivons $\alpha=a/2^n$ avec
$a\in U$ et $n\ge 3$ (voir a)), d'o\`u $c=n$. Pour $v(x)\ge n-[n/2]$ on a
\[\log(1+x)\equiv x-x^2/2\pmod{2^nO_K}\]
et m\^eme $\log(1+x)\equiv x\pmod{2^nO_K}$ si $n$ est impair. Dans ce cas, on peut choisir
$d^{-1}=\alpha$ comme en b). Si $n$ est pair, on \'ecrit $x=2^{n/2}u$; alors $x^2=2^nu^2$, donc
\begin{multline*}
\psi_K(\alpha\frac{x^2}{2})=\psi_K(\frac{a}{2^n}\frac{x^2}{2}) =
\psi_K(\frac{a}{2}u^2) =\psi_K(\frac{a}{2}u^F)\\ = \psi_K(\frac{a^{F^{-1}}}{2}
u)
=\psi_K(\frac{a^{F^{-1}}}{2^{n/2+1}}x)=\psi_K(2^{n/2-1}\alpha^{F^{-1}}x)
\end{multline*}
donc on peut choisir $d^{-1} = \alpha -2^{n/2-1}\alpha^{F^{-1}}$, d'o\`u
\[\chi_\alpha(d)\psi_K(d^{-1})=\psi_K(-\alpha\log(\alpha -2^{n/2-1}\alpha^{F^{-1}})+\alpha
-2^{n/2-1}\alpha^{F^{-1}}).\]

Supposons maintenant $n\ge 6$. Alors on peut \'ecrire
\begin{multline*}
\log(\alpha -2^{n/2-1}\alpha^{F^{-1}})=\log\alpha
+\log(1-2^{n/2-1}\alpha^{F^{-1}-1})\\
\equiv\log\alpha
-2^{n/2-1}\alpha^{F^{-1}-1}-2^{n-3}\alpha^{2(F^{-1}-1)}\pmod{2^nO_K}
\end{multline*}
d'o\`u
\[
\chi_\alpha(d)\psi_K(d^{-1})
=\psi_K(-\alpha(1-\log\alpha)
-2^{n-3}\alpha^{2F^{-1}-1})
\]
comme souhait\'e.
\end{proof}

Supposons $p>2$. Comme $K$ ne contient pas de racines $p$-i\`emes de l'unit\'e, tout caract\`ere
sauvage $\chi$ de $K^*$ s'\'ecrit de mani\`ere unique comme produit d'un caract\`ere
mod\'er\'ement ramifi\'e $\chi_0$ et d'un caract\`ere
$\chi_\alpha$. Ceci permet d'\'ecrire une formule explicite pour $W(\chi)$ \`a partir de la
proposition \ref{p3}: si $\chi=\chi_0\chi_\alpha$, on trouve
\[W(\chi) = \chi_0(\alpha)W(\chi_\alpha) = \chi_0(\alpha)G(\alpha)\psi_K(\alpha(1-\log
\alpha))\]
cf. \cite[p. 98 cor. 2]{tate}. 

Ainsi $W(\chi)$ se d\'ecompose canoniquement en produit de trois facteurs: une racine de
l'unit\'e $\chi_0(\alpha)$, une racine $4$-i\`eme de l'unit\'e
$G(\alpha)$ et une racine $p$-primaire de l'unit\'e $\psi_K(\alpha(1-\log
\alpha))$. Notons cette derni\`ere $W_p(\chi)$.

\begin{cor}\label{c6} Si $p>2$, on a
\[W_p(\chi^p)=W_p(\chi)^p\]
tant que $\chi^p$ est sauvage (c'est-\`a-dire que $\chi_\alpha^p\ne 1$).\qed
\end{cor}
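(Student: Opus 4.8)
The plan is to unwind the explicit formula for $W(\chi)$ established just above the statement and track what happens to its $p$-primary factor under $\chi\mapsto\chi^p$. Write $\chi=\chi_0\chi_\alpha$ for the canonical decomposition into a moderately ramified character $\chi_0$ and a logarithmic character $\chi_\alpha$, so that $W_p(\chi)=\psi_K(\alpha(1-\log\alpha))$. The first step is to identify the canonical decomposition of $\chi^p$. Since $\chi_\alpha(x)^p=\psi_K(p\alpha\log x)=\chi_{p\alpha}(x)$, we have $\chi^p=\chi_0^p\chi_{p\alpha}$; the character $\chi_0^p$ is again moderately ramified, and by hypothesis $\chi_{p\alpha}=\chi_\alpha^p\ne 1$, i.e.\ $\chi^p$ is wild. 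By the uniqueness of the tame/logarithmic decomposition for wild characters recalled above, $p\alpha$ is therefore a legitimate logarithmic parameter for $\chi^p$, and hence $W_p(\chi^p)=\psi_K\bigl(p\alpha(1-\log(p\alpha))\bigr)$.

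The second step uses Iwasawa's normalization of the logarithm. Because $K/\Q_p$ is unramified, $p$ is a uniformizer of $K$, so writing $p=p^1\cdot 1$ with unit part $1$ gives $\log p=0$; consequently $\log(p\alpha)=\log p+\log\alpha=\log\alpha$. Thus $W_p(\chi^p)=\psi_K\bigl(p\alpha(1-\log\alpha)\bigr)$. Finally, additivity of $\psi_K$ gives $\psi_K(p\beta)=\psi_K(\beta)^p$ for every $\beta\in K$; applying this with $\beta=\alpha(1-\log\alpha)$ yields
\[W_p(\chi^p)=\psi_K\bigl(\alpha(1-\log\alpha)\bigr)^p=W_p(\chi)^p,\]
which is the claim.

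There is essentially no hard point: the computation is immediate once one has the explicit formula for $W(\chi)$. The only things to be careful about are the bookkeeping that isolates the $p$-primary factor (the tame factor $\chi_0^p(p\alpha)$ and the quartic Gauss sum $G(p\alpha)$ of $\chi^p$ contribute to the prime-to-$p$ part, since $p>2$) and the invocation of $\log p=0$, which is exactly where the unramifiedness hypothesis on $K$ enters.
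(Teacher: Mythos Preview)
Your argument is correct and is exactly the computation the paper has in mind when it writes only ``\qed'': identify $\chi^p=\chi_0^p\chi_{p\alpha}$, apply the explicit formula $W_p(\chi_\beta)=\psi_K(\beta(1-\log\beta))$ with $\beta=p\alpha$, use $\log p=0$, and conclude by additivity of $\psi_K$. One small imprecision: the identity $\log p=0$ does not actually depend on $K/\Q_p$ being unramified --- with Iwasawa's convention one always has $p^e=p^e\cdot 1$, hence $\log p=\frac{1}{e}\log 1=0$; the unramifiedness hypothesis is used earlier, in the derivation of the explicit formula for $W(\chi_\alpha)$ (Proposition~\ref{p3}), not at this step.
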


\begin{cor} \label{c7} Soit $p>2$.\\
a) Supposons $\alpha=a/p^2$, o\`u $a\in O_K$. Alors
\[W_p(\chi_\alpha) = \psi_K(\frac{a^p}{p^2}).\]
b) Pour $a_1,\dots,a_p\in O_K$, on a
\[W_p((1-\chi_{a_1/p^2})\dots(1-\chi_{a_p/p^2})) = \psi_K(\frac{a_1\dots a_p}{p}).\]
c) Pour $a_1,\dots,a_{p+1}\in O_K$, on a $W_p((1-\chi_{a_1/p^2})\dots(1-\chi_{a_{p+1}/p^2}))
=1$.\\
d) Supposons $K=\Q_p$. Pour tout $n\ge 2$, on a $W_p((1-\chi_{1/p^n})^{p^{n-1}})= \exp(2\pi
i/p)$.
\end{cor}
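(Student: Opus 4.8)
The whole computation runs off the canonical factorisation $W(\chi)=\chi_0(\alpha)\,G(\alpha)\,W_p(\chi)$ recalled above, which for a logarithmic character $\chi=\chi_\alpha$ (so $\chi_0=1$) reads $W_p(\chi_\alpha)=\psi_K(\alpha(1-\log\alpha))$, together with three elementary inputs: since $K/\Q_p$ is unramified, $\Tr_{K/\Q_p}O_K=\Z_p$, so $\psi_K$ is trivial on $O_K$; the Iwasawa normalisation gives $\log p=0$, hence $\log(a/p^m)=\log a$; and the $p$-adic logarithm of any nonzero rational integer lies in $p\Z_p$ (write it $p^v w$ with $w\in\Z_p^*$, then its log is $\log\langle w\rangle\in p\Z_p$). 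For a): if $a\notin O_K^*$ then $\chi_{a/p^2}=1$ (since $|2(a/p^2)|\le p$) while $v(a^p/p^2)\ge p-2>0$, so both sides are $1$; assume $a\in O_K^*$, so $W_p(\chi_{a/p^2})=\psi_K(\tfrac a{p^2}(1-\log a))$ and it suffices to prove $\Tr_{K/\Q_p}\!\big(\tfrac1{p^2}(a^p-a+a\log a)\big)\in\Z_p$. Writing $a=\omega u$ with $\omega$ a Teichmüller representative and $u\in 1+pO_K$ (so $\log a=\log u$), one checks modulo $p^2O_K$ that $u^p\equiv 1$ and, from $\log u\equiv u-1$, that $u(1-\log u)\equiv 1$; hence $a^p-a+a\log a\equiv\omega^p-\omega\pmod{p^2O_K}$. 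Since $\omega^p=F(\omega)$ for the absolute Frobenius $F$ of $K$, $\Tr_{K/\Q_p}(\omega^p-\omega)=0$, and the leftover $O_K$-term has trace in $\Z_p$; this gives a).

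For b) and c): $\prod_{i=1}^n(1-\chi_{a_i/p^2})=\sum_{S\subseteq\{1,\dots,n\}}(-1)^{|S|}\chi_{\sigma_S/p^2}$ with $\sigma_S=\sum_{i\in S}a_i$, using $\chi_\alpha\chi_\beta=\chi_{\alpha+\beta}$; by the additivity of $W$ (applied to virtual sums of logarithmic characters, where the $\chi_0$-factor is trivial and the $G$-factor a prime-to-$p$ root of unity) and by a), $W_p\big(\prod_i(1-\chi_{a_i/p^2})\big)=\psi_K\big(p^{-2}\sum_S(-1)^{|S|}\sigma_S^p\big)$. Now the identity $\sum_{S\subseteq\{1,\dots,n\}}(-1)^{n-|S|}(\sum_{i\in S}a_i)^k=0$ for $k<n$ and $=n!\,a_1\cdots a_n$ for $k=n$ gives, for $n=p+1>p$, that the bracket vanishes, hence c); and for $n=p$ it equals $-p!\,a_1\cdots a_p\equiv p\,a_1\cdots a_p\pmod{p^2O_K}$ by Wilson's congruence $p!\equiv-p\pmod{p^2}$, so that (since $\psi_K$ kills $O_K$) we obtain $\psi_K(a_1\cdots a_p/p)$, proving b).

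For d) (now $K=\Q_p$): expand in $R(G)$, $(1-\chi_{1/p^n})^{p^{n-1}}=\sum_{j=0}^{p^{n-1}}(-1)^j\binom{p^{n-1}}{j}\chi_{j/p^n}$. The terms $j=0$ and $j=p^{n-1}$ are the trivial character (for $j=p^{n-1}$, $\chi_{1/p}=1$ as $|2/p|\le p$), contributing $1$; for $1\le j\le p^{n-1}-1$, $v_p(j)\le n-2$ so $\chi_{j/p^n}$ is wild and $W_p(\chi_{j/p^n})=\psi_{\Q_p}(\tfrac j{p^n}(1-\log j))$. Hence $W_p\big((1-\chi_{1/p^n})^{p^{n-1}}\big)=\psi_{\Q_p}(p^{-n}(A-B))$ with $A=\sum_{1\le j<p^{n-1}}(-1)^j\binom{p^{n-1}}{j}j$ and $B=\sum_{1\le j<p^{n-1}}(-1)^j\binom{p^{n-1}}{j}j\log j$. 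From $\sum_{j=0}^{p^{n-1}}(-1)^j\binom{p^{n-1}}{j}j=0$ and the endpoint $j=p^{n-1}$ contributing $-p^{n-1}$ (as $p^{n-1}$ is odd), $A=p^{n-1}$; from $j\binom{p^{n-1}}{j}=p^{n-1}\binom{p^{n-1}-1}{j-1}$ and $\log(p^{n-1})=0$ (which lets one restore the full binomial sum at $j=p^{n-1}$), $B=-p^{n-1}V$ with $V=\sum_{i=0}^{p^{n-1}-1}(-1)^i\binom{p^{n-1}-1}{i}\log(i+1)$. So $p^{-n}(A-B)=(1+V)/p$, and since each $\log(i+1)\in p\Z_p$ we get $V\in p\Z_p$, whence the value is $\psi_{\Q_p}(1/p)=\exp(2\pi i/p)$.

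The binomial and finite-difference identities and Wilson's congruence are routine; the only delicate point is the bookkeeping of the boundary characters — one must invoke $W_p(\chi_\alpha)=\psi_K(\alpha(1-\log\alpha))$ only for genuinely wild $\chi_\alpha$, identify the endpoint terms of the expansions as the trivial characters $\chi_{0/p^n}$ and $\chi_{1/p}$, and (the heart of d)) observe that the two ``$\log$-free'' endpoint effects ($-p^{n-1}$ in $A$, and the vanishing of $\log(p^{n-1})$ in $B$) are exactly what produces the $1$ in $1+V$, the term $V$ being harmlessly absorbed into $p\Z_p$.
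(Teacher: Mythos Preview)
Your argument is correct throughout. For a), b), c) you follow essentially the paper's route: Teichm\"uller decomposition plus Frobenius invariance of the trace for a), polarisation of $a\mapsto a^p$ plus Wilson for b), and vanishing of the higher finite difference for c).

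For d) your proof is genuinely different from the paper's. The paper does \emph{not} use the explicit formula $W_p(\chi_{j/p^n})=\psi_{\Q_p}\big(\tfrac{j}{p^n}(1-\log j)\big)$ term by term; instead it invokes the Galois/Adams framework built earlier (Corollaries \ref{c1} a), \ref{c5}, \ref{c6}) to prove the identity
\[
W_p\Big(\tbinom{p^{n-1}}{i}\chi^i\Big)=W_p(\chi)^{\,i\binom{p^{n-1}}{i}},
\]
so that $W_p((1-\chi)^{p^{n-1}})=W_p(\chi)^A$ with $A=\sum_{i=1}^{p^{n-1}-1}(-1)^i i\binom{p^{n-1}}{i}=p^{n-1}$. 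Your approach bypasses the whole \S2--\S3 machinery and works directly with the logarithmic formula, isolating the extra term $B=\sum(-1)^j\binom{p^{n-1}}{j}j\log j$ and killing it via the elementary observation that $\log m\in p\Z_p$ for every nonzero integer $m$ (together with the Iwasawa convention $\log p=0$ at the endpoint). This is more self-contained and makes the mechanism transparent; the paper's proof, by contrast, exhibits d) as a concrete illustration of the Adams-operation formulas, which is the thematic point of the section.
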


\begin{proof} a) La formule est vraie si $a$ est divisible par $p$, puisqu'alors
$\chi_\alpha=1$. Sinon, \'ecrivons
$a=a_0(1+pu)$, avec
$a_0^{q-1}=1$ et
$u\in O_K$. Alors
$\log\alpha = pu+p^2v$ avec $v\in O_K$, et
\[W_p(\chi_\alpha) = \psi_K(\frac{a_0}{p^2}(1+pu)(1-pu+p^2v))=\psi_K(\frac{a_0}{p^2}).\]

D'autre part $a_0= a_0^q\equiv a^q\pmod{p^2}$, donc $\frac{a_0}{p^2}\equiv
\frac{a^q}{p^2}\pmod{O_K}$. Mais soit $F$ l'automorphisme de Frobenius de $K$. On a
\begin{align*}
a^p&\equiv a^F\pmod{p}\\
\intertext{d'o\`u}
a^{p^{n+1}}&\equiv a^{p^nF} \pmod{p^{n+1}} \quad \forall n\ge 0\\
\intertext{donc}
\Tr_{K/\Q_p}(a^{p^{n+1}})&\equiv \Tr_{K/\Q_p}(a^{p^n}) \pmod{p^{n+1}} \quad \forall n\ge 0
\intertext{et par r\'ecurrence}
\Tr_{K/\Q_p}(a^q)&\equiv \Tr_{K/\Q_p}(a^p) \pmod{p^2}
\intertext{d'o\`u finalement}
\psi_K(\frac{a_0}{p^2})&=\psi_K(\frac{a^p}{p^2}).
\end{align*}

b) r\'esulte imm\'ediatement de a) puisque la forme polaire de $a^p$ est $p!a_1\dots a_p$ et
que $p!\equiv -1\pmod{p}$ (th\'eor\`eme de Wilson). c) r\'esulte aussi de a) (ou de b)). 

Pour d), posons $\chi=\chi_{1/p^n}$. D'apr\`es la proposition \ref{p3}, on a
\[W_p(\chi)=\psi_{\Q_p}(1/p^n)=\exp(2\pi i/p^n).\]

 On \'ecrit
\[(1-\chi)^{p^{n-1}} = \sum_{i=0}^{p^{n-1}} (-1)^i \binom{p^{n-1}}{i} \chi^{i}.\]

Pour $i=0$ et $i=p^{n-1}$ on a $\chi^i=1$, sinon $\chi^i\ne 1$; de plus $\chi^{p^{n-2}}$ est
d'ordre $p$. Soient
$i\in ]0,p^{n-1}[$ et
$t=v_p(i)$. Alors $v_p(\binom{p^{n-1}}{i}) =n-1-t$. Posons $i=p^ti_0$ et
$\binom{p^{n-1}}{i}=p^{n-1-t}u$. Alors, en utilisant les corollaires \ref{c1} a), \ref{c5} et
\ref{c6}:
\begin{multline*}
W_p(\binom{p^{n-1}}{i}\chi^i) = W_p(\chi^i)^{\binom{p^{n-1}}{i}} =
W_p(\chi^{p^ti_0})^{p^{n+1-t}u}=W_p(\chi^{p^{n-2}i_0})^{pu}\\
=W_p(\chi^{p^{n-2}})^{i_0^ppu}=W_p(\chi^{p^{n-2}})^{i_0pu}=W_p(\chi)^{p^ti_0p^{n-1-t}u}
=W_p(\chi)^{i\binom{p^{n-1}}{i}}.
\end{multline*}

Par cons\'equent, $W_p((1-\chi)^{p^{n-1}}) = W_p(\chi)^A$, avec
\[A=\sum_{i=1}^{p^{n-1}-1} (-1)^i i\binom{p^{n-1}}{i}.\]

La somme $\sum_{i=1}^{p^{n-1}} (-1)^i i\binom{p^{n-1}}{i}$ est nulle, comme on le voit en
prenant la d\'eriv\'ee de $(1-X)^{p^{n-1}}$. Donc $A=p^{n-1}$ et
\[W_p((1-\chi)^{p^{n-1}}) = W_p(\chi)^{p^{n-1}}=\exp(2\pi i/p). \]
\end{proof}

\begin{rque} a) Le corollaire \ref{c6} devient faux pour $p=2$, m\^eme si $\chi$ est de la forme
$\chi_\alpha$ (voir proposition \ref{p3} c)). Dans le corollaire \ref{c7} a), le premier cas
pour $p=2$ serait $n=3$. Les caract\`eres $\chi_\alpha$ sont alors quadratiques, donc
$\alpha\mapsto W(\chi_\alpha)$ est une application quadratique d'apr\`es \cite[p. 126, cor.
2]{tate}. J'ai donn\'e une formule pour certains de ces caract\`eres quadratiques dans
\cite[th. 2]{ultra}, \`a savoir
\[W(\rho_u) = i^{\Tr_{K/\Q_2}(\frac{u-1}{2}^2)}\]
pour $u\in 1+2O_K$, o\`u $\rho_u(x):= (u,x)$ pour $x\in K^*$.
Je renonce \`a la comparer \`a celle de la proposition
\ref{p3} c), \`a commencer par d\'eterminer $u$ en fonction de $\alpha$\dots

b) Le corollaire
\ref{c7} a) donne aussi la formule suivante: 
\[W_p((1-\chi_{a_1/p^2})(1-\chi_{a_2/p^2})) = \psi_K(p^{-1}\frac{(a_1+a_2)^p-a_1^p-a_2^p}{p})\]
o\`u on reconna\^\i t la seconde composante du vecteur de Witt associ\'e \`a $a_1+a_2\pmod{p}$.

Pour $n>2$, je ne sais pas si la fonction $\chi\mapsto W_p(\chi)$ est polynomiale d'ordre
$p^{n-1}$ sur les caract\`eres d'ordre $\le p^{n-1}$: c'est sugg\'er\'e par le corollaire
\ref{c7} d).  Ceci est \`a comparer avec le
th\'eor\`eme 4.15 de
\cite{dh}.
\end{rque}

\end{document}